\newtheorem{theorem}{Theorem}[section]
\newtheorem{corollary}[theorem]{Corollary}
\newtheorem{lemma}[theorem]{Lemma}
\newtheorem{proposition}[theorem]{Proposition}
\DeclareMathOperator{\GL}{GL}\DeclareMathOperator{\PSL}{PSL}
\DeclareMathOperator{\SL}{SL}
\DeclareMathOperator{\D}{D}
\DeclareMathOperator{\DM}{F}
\DeclareMathOperator{\M}{S}
\DeclareMathOperator{\FL}{L}
\DeclareMathOperator{\PSU}{PSU}
\DeclareMathOperator{\PSp}{PSp}\DeclareMathOperator{\PGL}{PGL}
\DeclareMathOperator{\Alt}{Alt}
\DeclareMathOperator{\Z}{\mathbf{Z}}
\def\Z{\mathbf{Z}}
\def\N{\mathbf{N}}
\def\F{\mathbf{F}}
\def\rw{\rightarrow}
\newcommand{\innp}[1]{\left< #1 \right>}
\newcommand{\abs}[1]{\left\vert#1\right\vert}
\newcommand{\norm}[1]{\left\vert\left\vert #1\right\vert\right\vert}
\newcommand{\set}[1]{\left\{#1\right\}}
\newcommand{\su}{\subset}
\begin{document}
\bibliographystyle{plain}


\title{\textbf{Characterizing linear groups \\ in terms of growth properties}}
\author{Khalid Bou-Rabee\thanks{The City College of New York, New York, NY. E-mail: \tt{khalid.math@gmail.com}}~~and D.~B.~McReynolds\thanks{Purdue University, West Lafayette, IN. E-mail: \tt{dmcreyno@purdue.edu}}}
\maketitle


\begin{abstract}
\noindent 
Residual finiteness growth measures how well-approximated a group is by its finite quotients. We prove that some related growth functions characterize linearity for a class of groups including all hyperbolic groups.
\end{abstract}

\section{Introduction}

Given a finitely generated, residually finite group $\Gamma$ and a non-trivial $\gamma \in\Gamma$, we consider three functions that measure the difficulty of verifying that $\gamma$ is non-trivial through homomorphisms to finite groups. The first function $\D_\Gamma(\gamma)$ is the minimum $\abs{Q}$, over all finite groups $Q$, such that there exists a homomorphism $\varphi\colon \Gamma \to Q$ with $\varphi(\gamma) \ne 1$. This function was introduced in \cite{B10}. In this paper, we consider two variations of $\D_\Gamma$ obtained by restricting the class of groups $Q$. We define $\FL_\Gamma(\gamma)$ to be the minimum $\abs{\GL(n,\F_q)}$, over all $n \in \N$ and finite fields $\F_q$, such that there exists a homomorphism $\varphi\colon \Gamma \to \GL(n,\F_q)$ with $\varphi(\gamma) \ne 1$. We define $\M_\Gamma(\gamma)$ to be the minimal $\abs{G}$, over all finite simple groups $G$, such that there exists a homomorphism $\varphi\colon \Gamma \to G$ with $\varphi(\gamma) \ne 1$. Note that we do not insist that any of the above homomorphisms be surjective, though for $\D_\Gamma$, the minimum $\abs{Q}$ for an element $\gamma$ always comes from a surjective homomorphism. Fixing a finite generating subset $X$ for $\Gamma$, for each $m \in \N$, we define
\[ \DM_\Gamma(m) = \max_{\substack{\gamma \in \Gamma - \set{1}, \\ \norm{\gamma}_X\leq m}} \D_\Gamma(\gamma), \quad \DM_{\Gamma,\FL}(m) = \max_{\substack{\gamma \in \Gamma - \set{1}, \\ \norm{\gamma}_X\leq m}} \FL_\Gamma(\gamma), \quad \DM_{\Gamma,\M}(m) = \max_{\substack{\gamma \in \Gamma - \set{1}, \\ \norm{\gamma}_X\leq m}} \M_\Gamma(\gamma). \]
For two functions $f,g\colon \N \rw \N$, we write $f \preceq g$ if there exists a natural number $C$ such that $f(m) \leq Cg(Cm)$ holds for all $m$ and write $f \sim g$ when $f \preceq g$ and $g \preceq f$. The dependence of the above functions on the generating subset $X$ is mild. Specifically, if $Y$ is another generating subset, the associated functions for $X,Y$ satisfy the equivalence relation $\sim$ (see \cite[Lem 1.1]{B10}). In \cite{BM15}, we proved that if $\Gamma \leq \GL(n,\mathbf{K})$ for some $n \in \N$ and field $\mathbf{K}$, then $\DM_\Gamma(m) \preceq m^D$ for $D \in \N$ depending only on $n$ and $\mathbf{K}$. The main result of this article is the following characterization of linearity (property (c)) for hyperbolic groups $\Gamma$  in terms of the growth rates of  $\DM_{\Gamma,\FL},\DM_{\Gamma,\M}$.

\begin{theorem}\label{T:Main}
Let $\Gamma$ be a hyperbolic group. The following are equivalent:
\begin{itemize}
\item[(a)]
There exists $D \in \N$ such that $\DM_{\Gamma,\FL}(m) \preceq m^D$.
\item[(b)]
There exists $D' \in \N$ such that $\DM_{\Gamma,\M}(m) \preceq m^{D'}$.
\item[(c)]
There exists $n \in \N$, a field $\mathbf{K}$, and an injective homomorphism $\varphi\colon \Gamma \to \GL(n,\mathbf{K})$.
\end{itemize}
\end{theorem}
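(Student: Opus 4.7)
The plan is to establish the cycle $(c) \Rightarrow (a) \Rightarrow (b) \Rightarrow (c)$, expecting the last implication to carry the main content.

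For $(c) \Rightarrow (a)$ I would invoke \cite{BM15}. The witnessing quotients constructed there arise as reductions modulo maximal ideals in the ring of definition of $\varphi$, so they already sit inside $\GL(n, \F_q)$; hence the bound $\DM_\Gamma(m) \preceq m^D$ descends directly to $\DM_{\Gamma,\FL}(m) \preceq m^D$. For $(a) \Rightarrow (b)$ I would walk down a chief series of $\varphi(\Gamma) \leq \GL(n, \F_q)$ to the first chief factor $T^k$ on which $\varphi(\gamma)$ fails to vanish. Accounting for the $\Gamma$-action permuting the $k$ copies of $T$ via the induced permutation representation to $S_k$ (and projecting to a single simple factor on a bounded-index subgroup of $\Gamma$) yields a homomorphism $\Gamma \to G$ to a finite simple group $G$ with $|G|$ polynomially bounded in $|\GL(n, \F_q)|$ and $\varphi(\gamma)$ detected.

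The main content is $(b) \Rightarrow (c)$. By the classification of finite simple groups, each minimal simple witness $G_\gamma$ for $\M_\Gamma(\gamma)$ is cyclic $\Z/p$, alternating $A_n$, or of Lie type $X(q)$. The polynomial bound $|G_\gamma| \preceq \norm{\gamma}_X^{D'}$ forces $n = O(\log \norm{\gamma}_X / \log \log \norm{\gamma}_X)$ in the alternating case, and forces both the Lie rank $r$ and $\log q$ to be at most polylogarithmic in $\norm{\gamma}_X$ in the Lie-type case. Each $G_\gamma$ then embeds naturally into $\GL(N_\gamma, \F_{q_\gamma})$ with $N_\gamma$ polylogarithmic. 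For non-elementary hyperbolic $\Gamma$ infinitely many witnesses must be non-abelian simple, and by pigeonhole on Dynkin type and rank one may extract an infinite subfamily $\{\varphi_\gamma : \Gamma \to \GL(N, \overline{\F_{p_\gamma}})\}$ of a common dimension $N$.

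The remaining task, and I expect the main obstacle, is to compress this infinite family into a single faithful representation $\Gamma \to \GL(N, \mathbf{K})$. The plan is to view the $\varphi_\gamma$ as points in the representation variety $R = \Hom(\Gamma, \GL(N))$ and extract a limit $\varphi_\infty$ using the algebraic finiteness and rigidity properties of Hom-varieties of hyperbolic groups (in the spirit of Sela and the Rips-Sela shortening argument). Injectivity of $\varphi_\infty$ would follow from the fact that every non-trivial $\gamma$ is detected by some $\varphi_\delta$, provided one can rule out a limiting collapse of the kernel. An alternative strategy would bypass the topological limit entirely and construct $\varphi_\infty$ as an ultraproduct of the $\varphi_\gamma$, producing a linear representation of $\Gamma$ over a non-standard field, and then descend to a genuine field of characteristic zero or $p$.
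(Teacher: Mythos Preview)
The central gap is in $(b)\Rightarrow(c)$. Your observation that $|G_\gamma|\preceq\norm{\gamma}_X^{D'}$ forces the Lie rank of $G_\gamma$ (or $n$ in the alternating case) to be at most polylogarithmic in $\norm{\gamma}_X$ is correct, but polylogarithmic is not bounded: as $\gamma$ ranges over $\Gamma$ the dimensions $N_\gamma$ may genuinely tend to infinity. Pigeonhole over Dynkin type and rank then produces an infinite subset $S\subset\Gamma$ with a common dimension $N$, but each $\varphi_\delta$ with $\delta\in S$ is only guaranteed to detect $\delta$ itself, not an arbitrary $\gamma$. Consequently neither a limit point in $\Hom(\Gamma,\GL_N)$ nor an ultraproduct over $S$ has any reason to be injective, and Rips--Sela rigidity does not help: it controls the global structure of $\Hom(\Gamma,\GL_N)$ but says nothing about the kernel of a particular limit.

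The missing mechanism, and the actual content of the argument, is a device that forces the rank of the witness to be bounded by a constant independent of $\gamma$. The paper obtains this from the uniform malabelianity of a torsion-free finite-index subgroup $\Gamma_0$ of a non-elementary hyperbolic group. Fix an infinite-order $\gamma_0\in\Gamma_0$. For any $\gamma\ne1$ and any $j$, iterated conjugated commutators produce a single element $\eta_j$, of word length $O(j^3)$, lying in the normal closure of $\gamma$ and of each of $\gamma_0^2,\dots,\gamma_0^j$. Any finite simple $G$ detecting $\eta_j$ then detects $\gamma$ and satisfies $m_1(G)\geq j$ while $|G|\leq Cj^{3D'}$; hence $\log|G|/\log m_1(G)$ is bounded by a constant depending only on $D'$, and a case check over the classification (Lemma~\ref{L:LarsenLemma}) turns this into a uniform bound on $r(G)$. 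Now every $\gamma$ is witnessed in a fixed dimension, malabelianity upgrades residually~$\mathcal{F}$ to fully residually~$\mathcal{F}$, and the ultraproduct is faithful. Your $(a)\Rightarrow(b)$ sketch has a related defect: the number $k$ of simple factors in a chief factor of $\varphi(\Gamma)\leq\GL(n,\F_q)$ is not bounded in terms of $D$ alone, so the claimed ``bounded-index subgroup'' need not exist; the paper routes that implication through $(c)$ instead.
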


The first purely group theoretic characterization of (c), for fields $\mathbf{K}$ with $\mathrm{char}(\mathbf{K})=0$, was established by Lubotzky \cite{Lub88}, and is based on the existence of uniformly powerful $p$--filtrations.  Larsen \cite{L01} proved that the abscissa of convergence of a certain subgroup $\zeta$--function is the inverse of the minimal possible dimension of the Zariski closure over all finite dimensional representations with infinite image. Though the representation that Larsen produces need not be faithful, his method is closely related to our approach which is based on a well-known, elementary characterization of (c); see Lemma \ref{L:UltraLemma} below. 


\paragraph{Acknowledgements}
The authors are grateful to Noel Brady, Mikhail Ershov, Benson Farb, Daniel Groves, Richard Kent, Michael Larsen, Lars Louder, Denis Osin, Alan Reid, Ignat Sokoro, and Henry Wilton for conversations on this topic. The first author was partially supported by NSF RTG Grant DMS-0602191, NSF Grant DMS-1405609, and by Ventotene 2013. The second author was partially supported by NSF grants DMS-1105710 and DMS-1408458. Finally, a huge debt is owed to the referee for numerous helpful suggestions and corrections to an earlier draft of the paper.

\section{Finite Groups}

In this section, we record some results about finite groups that we will need for Theorem \ref{T:Main}. Lemma \ref{L:LarsenLemma0} is the main technical result.

For a finite group $G$, set $m_1(G) = \max_{g \in G} \abs{\innp{g}}$. For each prime $p \in \N$, set $r_p(G)$ to be the minimal positive integer $n$ such that there exists $t \in \N$ and an injective homomorphism $\varphi\colon G \to \PGL(n,\F_{p^t})$. We define $r(G) = \min_{p \text{ prime}} r_p(G)$. We define $r_p^{\FL}(G)$, $r^{\FL}(G)$ identically but with $\GL_n$ in place of $\PGL_n$ and note that if $G$ is simple, we have $r(G) \leq r^{\FL}(G)$.
Further, since $\PGL_n(K) \leq \GL_{n^2}(K)$ for any field $K$, we have $r^{\FL}(G) \leq (r(G))^2$ for any group $G$.

Throughout, $\mathcal{F} = \set{G_j}_{j \in \N}$ will denote a set of finite groups. For our purposes, we will assume that the groups $G_j$ are pairwise non-isomorphic. We say that $\mathcal{F}$ has \textbf{bounded projective rank for some $R \in \N$} if $r(G_j) \leq R$ for all $j \in \N$ and has \textbf{bounded rank for some $R \in \N$} if $r^{\FL}(G_j) \leq R$ for all $j$. For any non-principal ultrafilter $\omega$ on $\N$, $G_\omega$ will denote the ultraproduct group $\prod_\omega G_j$ of $\mathcal{F}$ relative to $\omega$. The following is well-known (see \cite{Hal95} for instance).

\begin{lemma}\label{L:UltraLemma}
If $\mathcal{F} = \set{G_j}_{j \in \N}$ is a set  of finite groups with either bounded rank or bounded projective rank for some $R \in \N$, then for any (non-principal) ultrafilter $\omega$ on $\N$, there is an injective homomorphism $\varphi_\omega\colon G_\omega \to \GL(n,\mathbf{K})$ for some $n \in \N$ and field $\mathbf{K}$.
\end{lemma}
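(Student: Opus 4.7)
The strategy is to construct the desired embedding by taking the ultraproduct of the embeddings that already exist at each finite level, and then identifying the ultraproduct of general linear groups over varying finite fields with a general linear group of the same size over the ultraproduct field.

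The first step is to reduce to the bounded rank case. Suppose $\mathcal{F}$ has bounded projective rank $R$. The adjoint action of $\PGL(n,K)$ on the matrix algebra $M_n(K) \cong K^{n^2}$ is faithful, giving a natural embedding $\PGL(n,K) \hookrightarrow \GL(n^2,K)$ for every field $K$. Composing each embedding $G_j \hookrightarrow \PGL(R,\F_{p_j^{t_j}})$ with this embedding yields $r^{\FL}(G_j) \leq R^2$ for every $j$, so we may assume throughout that $\mathcal{F}$ has bounded rank $R$.

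Next I would fix, for each $j$, a finite field $K_j = \F_{p_j^{t_j}}$ and an injective homomorphism $\varphi_j\colon G_j \hookrightarrow \GL(n_j,K_j)$ with $n_j \leq R$; by sending $A \mapsto \mathrm{diag}(A,I_{R-n_j})$ I pad these to injective homomorphisms $\varphi_j\colon G_j \hookrightarrow \GL(R,K_j)$, so the target has constant size $R$. Let $\mathbf{K} = \prod_\omega K_j$ be the ultraproduct. Since being a field is expressible by first-order sentences in the language of rings and every $K_j$ is a field, {\L}o\'s's theorem ensures that $\mathbf{K}$ is a field. Moreover, the natural map
\[
\prod_\omega \GL(R,K_j) \;\longrightarrow\; \GL(R,\mathbf{K}), \qquad [(A_j)]_\omega \;\longmapsto\; \bigl([(a_{k\ell,j})]_\omega\bigr)_{k,\ell},
\]
is an isomorphism of groups: $\GL(R,-)$ is cut out inside $R^2$--tuples by the first-order condition $\det \neq 0$ (and the inverse is polynomial in the entries and $\det^{-1}$), so this follows again from {\L}o\'s's theorem.

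Finally I would define $\varphi_\omega\colon G_\omega \to \GL(R,\mathbf{K})$ by the componentwise formula $[(g_j)]_\omega \mapsto [(\varphi_j(g_j))]_\omega$, where the right side is viewed in $\GL(R,\mathbf{K})$ via the identification above. This is well defined and a group homomorphism because each $\varphi_j$ is. For injectivity, suppose $\varphi_\omega([(g_j)]_\omega) = 1$. Then by the identification of ultraproducts, $\varphi_j(g_j) = I_R$ for $\omega$-almost every $j$; since each $\varphi_j$ is injective, $g_j = 1$ for $\omega$-almost every $j$, whence $[(g_j)]_\omega = 1$ in $G_\omega$.

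The only step with any subtlety is the identification $\prod_\omega \GL(R,K_j) \cong \GL(R,\mathbf{K})$, which I expect to be the main place where one has to be careful; it amounts to applying {\L}o\'s's theorem to the first-order definition of invertibility and matrix multiplication, and once it is granted, the rest of the argument is purely mechanical.
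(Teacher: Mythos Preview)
Your argument is correct and is precisely the standard ultraproduct proof the paper has in mind; the paper does not spell it out but merely cites \cite{Hal95} and remarks that $n=R^2$ suffices via $\PGL_n(K)\leq\GL_{n^2}(K)$, which is exactly your reduction step. Your write-up therefore supplies the details the paper omits, with the same approach and the same bound on $n$.
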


The integer $n$ can be taken to be $R^2$ in both cases since $r^{\FL}(G) \leq (r(G))^2$.

\subsection{A criterion for bounded rank}\label{SS:FG}

The following lemma was suggested to us by the referee.

\begin{lemma}\label{L:LarsenLemma0}
Let $\mathcal{F}=\set{G_j}_{j \in \N}$ be a set of finite groups.
\begin{itemize}
\item[(a)] If $G_j = \GL(n_j,\F_{q_j})$ for all $j$, then $\frac{\log \abs{G_j}}{\log(m_1(G_j))}$ is unbounded if and only if $r^{\FL}(G_j)$ is unbounded.
\item[(b)]
If $G_j$ is simple for all $j$, then $\frac{\log \abs{G_j}}{\log(m_1(G_j))}$ is unbounded if and only if $r(G_j)$ is unbounded.
\end{itemize}
\end{lemma}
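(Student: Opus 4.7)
The plan is to prove (a) by direct computation with the structure of $\GL(n,q)$ and (b) by case analysis via the Classification of Finite Simple Groups (CFSG), using Landazuri--Seitz lower bounds on projective representation degrees in cross characteristic.

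For (a), I would verify that both $\log|\GL(n_j,q_j)|/\log m_1(\GL(n_j,q_j))$ and $r^{\FL}(\GL(n_j,q_j))$ are comparable to $n_j$, uniformly in $q_j$. The standard product formula gives $\log|\GL(n,q)| = n^2 \log q + O(\log q)$; for the maximum element order, the Jordan decomposition writes any $g$ as a commuting product of a semisimple element (of order dividing $\lcm(q^{\lambda_i}-1) \leq q^n$ for some partition $\lambda$ of $n$) and a unipotent element (of order at most $p^{\lceil \log_p n \rceil}$), while a Singer cycle realizes order $q^n - 1$. Hence $\log m_1(\GL(n,q))$ is comparable to $n \log q$, and the ratio is comparable to $n$. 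The natural embedding gives $r^{\FL}(\GL(n,q)) \leq n$. For the reverse inequality, I would use the split torus $(\F_q^\times)^n$: in equal characteristic, any faithful embedding into $\GL(R,\F_{p^t})$ diagonalizes an elementary abelian $\ell$-subgroup of rank $n$ (for any prime $\ell \mid q-1$ coprime to $p$), forcing $n \leq R$; the exceptional case $q=2$ is handled by passing to a large elementary abelian $p$-subgroup of the upper unipotent radical. In cross characteristic, the Landazuri--Seitz bound gives $R$ at least exponential in $n$.

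For (b), I invoke CFSG. After restricting to a subsequence, $\{G_j\}$ lies in one of: the cyclic simple groups, the alternating groups, a single (possibly twisted) Lie type family, or the sporadic groups. In the cyclic case the ratio equals $1$ and $r = 2$; in the sporadic case only finitely many isomorphism types appear, so both quantities are bounded. For $A_{n_j}$ with $n_j \to \infty$, Stirling gives $\log|A_n|$ of order $n \log n$ while Landau's function gives $\log m_1(A_n)$ of order $\sqrt{n \log n}$, so the ratio tends to infinity; Landazuri--Seitz (or a direct argument using the permutation representation) gives $r(A_n) \geq n-2$ for $n$ large. For Lie type of rank $n$ over $\F_q$, the algebraic dimension is quadratic in $n$ and the Coxeter number is linear in $n$ (or bounded, for exceptional types), so the ratio is comparable to $n$; in defining characteristic the natural representation gives $r(G) \leq n + O(1)$, while Landazuri--Seitz makes cross-characteristic embeddings much larger, so $r(G)$ is comparable to $n$. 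In each family the ratio and $r$ are simultaneously bounded or simultaneously unbounded.

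The main technical obstacle is the uniform comparison between $\log|G|/\log m_1(G)$ and the rank across all Lie type families (classical, exceptional, twisted), combined with invoking Landazuri--Seitz to rule out cheap cross-characteristic embeddings. A secondary concern is the case $q = 2$ in (a), where the split torus is trivial and an alternative elementary abelian subgroup argument is needed. Finally, one must carefully absorb small-parameter exceptional isomorphisms between families (e.g., $A_5 \cong \PSL(2,4) \cong \PSL(2,5)$) into the pairwise non-isomorphism assumption when passing to a single family.
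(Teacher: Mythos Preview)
Your strategy coincides with the paper's: reduce (b) via CFSG to a single family and then verify, family by family, that both $\log|G|/\log m_1(G)$ and $r(G)$ (resp.\ $r^{\FL}$) are comparable to the Lie rank $n$. The paper carries this out by tabulating explicit inequalities from the literature---Kantor--Seress and Buturlakin/Spiga for $m_1$, Kleidman--Liebeck for $r$, and Dixon's bound on the derived length of solvable linear groups for the lower bound $r^{\FL}(\GL(n,q))\geq n-2$---whereas you argue more structurally via Singer cycles, elementary abelian subgroups, and Landazuri--Seitz. Either package works and yields the same conclusion.

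One small gap: your patch for $q=2$ in (a) does not work as stated. An elementary abelian $2$-group of arbitrary rank $m$ embeds in $\GL(2,\F_{2^m})$ via $a\mapsto\begin{psmallmatrix}1&a\\0&1\end{psmallmatrix}$ (identifying $(\Z/2\Z)^m$ with the additive group of $\F_{2^m}$), so in defining characteristic a large unipotent elementary abelian subgroup gives no lower bound on the representation dimension. The paper's Dixon argument---the Borel of $\GL(n,q)$ has derived length growing with $n$, while solvable subgroups of $\GL(R,K)$ have derived length bounded in terms of $R$---handles all $q$ uniformly; alternatively you can restrict to the simple section $\GL(n,2)=\PSL(n,2)$ and cite the defining-characteristic minimal degree from Kleidman--Liebeck.
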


To prove this lemma we have compiled the requisite information for certain families of finite groups. For a finite simple group of Lie type $G(\F_q)$ over $\F_q$ with $q=p^t$, we refer the reader \cite{Che55}, \cite{Car72}, \cite{KL90}, or \cite{Wil12}. The orders of these groups are well-known and the bounds we provide are coarse; we will use these bounds in the sequel without specific mention. The monograph \cite[Ch 5]{KL90} contains a complete summary on bounds for $r(G)$ for both alternating groups and simple groups of Lie type; see Proposition 5.3.7, Table 5.4.C, and Corollary 5.4.14 in \cite{KL90}. Lower bounds for $r_{p'}(G(\F_q))$ for primes $p' \ne p$ have been extensively studied; see \cite{TZ96} and the references therein. The representation theory over fields with the defining characteristic $p$ has also been extensively studied. In most cases, $r_p(G(\F_q)) = r(G(\F_q))$. The values for $r(G)$ given below are from Table 5.4.C and Corollary 5.4.14 in \cite{KL90}. We write $f \approx g$ when $\lim_q \frac{f(q)}{g(q)} = C \in (0,\infty)$. For computational ease, with regard to the proof of Lemma \ref{L:LarsenLemma0}, we have opted for simple, coarse inequalities over precise values/asymptotics in most situations. That is especially true for the values $m_1$ and $\abs{G}$. Finally, our use of the classification of finite simple groups is at least that Lemma \ref{L:LarsenLemma0} hold for the sporadic groups. That is weaker than the assertion that there are only finitely many sporadic groups.

\paragraph{(1) $\mathrm{Alt}(n)$ and $\GL(n,\F_q)$.}

For the alternating groups $\Alt(n)$ for $n \geq 5$, we have:
\begin{align*}
\abs{\mathrm{Alt}(n)} &= n!/2, \\ 
\log(m_1(\mathrm{Alt}(n))) &\approx \sqrt{n\log n}, \\ 
r(\mathrm{Alt}(n)) &\leq n-2, \quad r(\mathrm{Alt}(n)) = n-2 \text{ for }n \geq 9.
\end{align*}
The first estimate for $m_1$ is due to Landau \cite{L03} (see (1) in \cite{Mil87} for the above explicit asymptotics). We see that
\begin{equation}\label{E:Alt}
C_1\frac{\log(n!/2)}{\sqrt{n\log n}} \leq \frac{\log\abs{\mathrm{Alt}(n)}}{\log(m_1(\mathrm{Alt}(n)))} < C_2 \frac{\log(n!/2)}{\sqrt{n\log n}},
\end{equation}
for fixed constants $C_1,C_2>0$. For the groups $\GL(n,\F_q)$ for $n \geq 1$ and $q=p^t$, we have
\begin{align*}
q^{n^2-1} &< \abs{\GL(n,\F_q)} < q^{n^2}, \\ 
q^{n-1} &< m_1(\GL(n,\F_q)) < q^{n+1}, \\ 
n-2 &\leq r^{\FL}(\GL(n,\F_q)) \leq n.
\end{align*}
By Niven \cite{Niv48} (see also \cite[Cor 2]{Dar05}), $m_1(\GL(n,\F_q))=q^n-1$. The lower bound for $r^{\FL}$ is obtained by using derived/solvable lengths of solvable subgroups of $\GL(n,\F_q)$ (see Corollary on p. 152 of \cite{Dix68}). We see that
\begin{equation}\label{E:GL}
n-1 \leq \frac{\log\abs{\GL(n,\F_q)}}{\log(m_1(\GL(n,\F_q)))} < \frac{n^2}{n-1}.
\end{equation}

\paragraph{(2) Type $\mathrm{A}_n$.}

The groups of type $\mathrm{A}_n(q)$ for $n \geq 1$ and $q=p^t$ are $\PSL(n+1,q)$. We have
\begin{align*} 
q^{n^2-3} &< \abs{\PSL(n+1,q)} < q^{n^2 +2n +1}, \\ 
q^{n-2} &< m_1(\PSL(n+1,q)) < q^{n+2}, \\
r(\PSL(n+1,q)) &= r_p(\PSL(n+1,q))  = n+1, \quad (\text{for }(n,q) \ne (1,4), (1,5), (2,2)).
\end{align*}
The numbers $m_1$ are explicitly calculated in \cite[Table A.1]{KS}. We see that
\begin{equation}\label{E:An}
\frac{(n^2-3)}{(n+2)} < \frac{\log \abs{\PSL(n+1,q)}}{\log(m_1(\PSL(n+1,q)))} < \frac{n^2 +2n +1}{(n-2)}.
\end{equation}

\paragraph{(3) Type $^2 \mathrm{A}_n$.}

The groups of type $^2\mathrm{A}_n(q^2)$ for $n\geq 2$ and $q=p^t$ are $\PSU(n+1,q)$. We have
\begin{align*}
q^{n^2+2n-1} &< \abs{\PSU(n+1,q)} < q^{n^2 +2n +1}, \\ 
q^{n-4} &< m_1(\PSU(n+1,q)) < q^{n+1},  \\
r(\PSU(n+1,q)) &= r_p(\PSU(n+1,q)) = n+1, \quad (\text{for }(n,q) \ne (3,2)).
\end{align*}
The numbers $m_1$ are explicitly calculated in \cite[Table A.2]{KS}. We see that
\begin{equation}\label{E:An2}
\frac{n^2+2n-1}{n+1} < \frac{\log\abs{\PSU(n+1,q)}}{\log(m_1(\PSU(n+1,q)))} < \frac{n^2+2n+1}{n-4}.
\end{equation}

\paragraph{(4) Type $\mathrm{B}_n$.}

The groups of type $\mathrm{B}_n(q)$ for $n\geq 2$ and $q=p^t$ are given by $\mathrm{P}\Omega(2n+1, q)$. We have
\begin{align*}
q^{2n^2+n-2} &< \abs{\mathrm{P}\Omega(2n+1, q)} < q^{2n^2+n+1}, \\ 
q^{n-2} &< m_1(\mathrm{P}\Omega(2n+1, q)) < q^{n+2}, \\ 
r(\mathrm{P}\Omega(2n+1, q)) &= r_p(\mathrm{P}\Omega(2n+1, q)) = 2n+1, \quad (\text{for odd }q \text{ and }n \ne 2), \\
r(\mathrm{P}\Omega(2n+1, q)) &= r_p(\mathrm{P}\Omega(2n+1, q)) = 2n, \quad (\text{for } n=2 \text{ or even }q). 
\end{align*}

The numbers $m_1$ are explicitly calculated in \cite[Table A.4]{KS} for odd $q$ and \cite[Cor 4]{But} when $q=2^t$ (see also \cite{Spiga} since $\mathrm{B}_n(2^t) = \mathrm{C}_n(2^t)$). We see that
\begin{equation}\label{E:Bn}
\frac{2n^2+n-2}{n+2} < \frac{\log\abs{\mathrm{P}\Omega(2n+1, q)}}{\log(m_1(\mathrm{P}\Omega(2n+1, q)))} < \frac{2n^2+n+1}{n-2}.
\end{equation}

\paragraph{(5) Type $\mathrm{C}_n$.}

The groups of type $\mathrm{C}_n(q)$ for $n\geq 3$ and $q=p^t$ are given by $\PSp(2n,q)$. We have
\begin{align*}
q^{2n^2+n-2} &< \abs{\PSp(2n,q)} < q^{2n^2+n+1}, \\ 
q^{n-2} &< m_1(\PSp(2n,q)) < q^{n+2}, \\ 
r(\PSp(2n,q)) &= r_p(\PSp(2n,q) = 2n.
\end{align*}
The numbers $m_1$ are explicitly calculated in \cite[Table A.3]{KS} for $q$ odd and \cite[Thm 1.1]{Spiga} (see also Lemma 2.3 in \cite{Spiga}) when $q$ is $2^t$. We see that
\begin{equation}\label{E:Cn}
\frac{2n^2+n-2}{n+2} < \frac{\log\abs{\PSp(2n,q)}}{\log(m_1(\PSp(2n,q)))} < \frac{2n^2+n+1}{n-2}.
\end{equation}

\paragraph{(6) Type $\mathrm{D}_n$.}

The groups of type $\mathrm{D}_n(q)$ for $n\geq 4$ and $q=p^t$ are given by $\mathrm{P}\Omega^+(2n, q)$. We have
\begin{align*}
q^{2n^2-n-2} &< \abs{\mathrm{P}\Omega^+(2n, q)} < q^{2n^2 -n+1}, \\ 
q^{n-2} &< m_1(\mathrm{P}\Omega^+(2n, q)) < q^{n+2}, \\ 
r(\mathrm{P}\Omega^+(2n,q)) &= r_p(\mathrm{P}\Omega^+(2n,q)) = 2n, \quad (\text{for } (n,q) \ne (4,2)
\end{align*}
The numbers $m_1$ are explicitly calculated in \cite[Table A.5]{KS} for odd $q$ and \cite[Cor 4]{But} when $q=2^t$. We see that
\begin{equation}\label{E:Dn}
\frac{2n^2-n-2}{n+2} < \frac{\log\abs{\mathrm{P}\Omega^+(2n, q)}}{\log(m_1(\mathrm{P}\Omega^+(2n, q)))} < \frac{2n^2-n+1}{n-2}.
\end{equation}

\paragraph{(7) Type $^2 \mathrm{D}_n$.}

The groups of type $^2\mathrm{D}_n(q^2)$ for $n\geq 4$ and $q=p^t$ are given by $\mathrm{P}\Omega^-(2n, q)$. We have
\begin{align*}
q^{2n^2-n-2} &< \abs{\mathrm{P}\Omega^-(2n, q)} < q^{2n^2 -n+1}, \\ 
q^{n-2} &< m_1(\mathrm{P}\Omega^-(2n, q)) < q^{n+2}, \\ 
r(\mathrm{P}\Omega^-(2n, q)) &= r_p(\mathrm{P}\Omega^-(2n, q)) = 2n.
\end{align*}
The numbers $m_1$ are explicitly calculated in \cite[Table A.6]{KS} for odd $q$ and \cite[Cor 4]{But} when $q=2^t$. We see that
\begin{equation}\label{E:Dn2}
\frac{2n^2-n-2}{n+2} < \frac{\log\abs{\mathrm{P}\Omega^-(2n, q)}}{\log(m_1(\mathrm{P}\Omega^-(2n, q)))} < \frac{2n^2-n+1}{n-2}.
\end{equation}

Excluding alternating groups, in all the above simple families $\frac{\log\abs{G(n,q)}}{\log(m_1(G(n,q)))} \approx r(G(n,q))$ (in the parameter $n$).

\paragraph{(8) Exceptional Groups Lie type.}

For the remaining exceptional finite simple groups of Lie type and cyclic groups of prime order, both $r(G)$ and $\frac{\log\abs{G}}{\log(m_1(G))}$ are uniformly bounded above by $248$; for the sporadic groups, $196883$ is an upper bound for both $r(G)$ and $\frac{\log\abs{G}}{\log(m_1(G))}$. We have included a table summarizing the information for the exceptional and cyclic families. We refer the reader to Table 5.4.C and Corollary 5.4.14. in \cite{KL90} for the values of $r$ and to Table A.7 in \cite{KS} for the values of $m_1$. 

 \begin{table}[H]
\begin{center}
\newcolumntype{Q}{ >{$}c <{$}}
\begin{tabular}{|Q | Q | Q | Q |}
\hline
\mbox{Family}   &  \mbox{$\abs{G} \approx$} & \mbox {$m_1(G) \approx $} & \mbox{$r(G)$} \\
\hline
 \Z/p\Z, \, p \text{ prime}   &  p &  p  & 1 \\
 \textrm{E}_6 (q)    &   q ^ {78} & q^6  &  27 \\
 \textrm{E}_7 (q)   &   q ^ {133} & q^7 & 56 \\
 \textrm{E}_8 (q)   &   q ^ {248} & q^8  & 248 \\
 \textrm{F}_4 (q)   &   q ^ {52}  & q^4  & 25 \leq r(G) \leq 26\\
 \textrm{G}_2 (q)   &   q ^ {14} &  q^2  & 6 \leq	 r(G) \leq 7 \\
 ^2\textrm{E}_6 (q^ 2)    &  q^{78}  & q^6  & 27 \\
 ^3\textrm{D}_4 (q ^ 3)   &  q ^ {28} & q^4   & 8 \\
 ^2\textrm{B}_2 (2 ^ {2 j + 1})    &  q^5,  \mbox{where } q = 2 ^ {2 j + 1} & q^2  & 4 \\
 ^2\textrm{G}_2 (3 ^ {2 j + 1})    &  q^2 , \mbox{where } q = 3 ^ {2 j + 1} & q^2 & 7 \text{ (for }j \ne 1) \\
 ^2\textrm{F}_4 (2 ^ {2 j + 1})    &   \  q^{26}, \mbox{where } q = 2 ^ {2 j + 1}  & q^2  & 26 \text{ (for }j \ne 1) \\
\hline
\end {tabular}
\caption{}
\label{table:FS3}
\end{center}
\end{table}

\begin{proof}[Proof of Lemma \ref{L:LarsenLemma0}]
In the case $G_j = \GL(n_j,\F_{q_j})$, we see from \eqref{E:GL} that $\frac{\log \abs{G_j}}{\log(m_1(G_j))}$ is unbounded if and only if $r^{\FL}(G_j)$ is unbounded. In the case $G_j$ is simple for all $j$, if either $r(G_j)$ or $\frac{\log\abs{G_j}}{\log(m_1(G_j))}$ is unbounded, we know that there is a subsequence $\set{G_{j_i}}$ of $\set{G_j}$ where none of the terms are sporadic or in Table \ref{table:FS3}. Passing to another subsequence, we can assume that for all $i \in \N$, either $G_{j_i} = \mathrm{Alt}(n_{j_i})$ or $G_{j_i}$ is of Lie type and in precisely one of the families $\mathrm{A}_{n_{j_i}}(q_{j_i}),~^2\mathrm{A}_{n_{j_i}}(q_{j_i}^2),\mathrm{B}_{n_{j_i}}(q_{j_i}),\mathrm{C}_{n_{j_i}}(q_{j_i}),\mathrm{D}_{n_{j_i}}(q_{j_i}),~^2\mathrm{D}_{n_{j_i}}(q_{j_i}^2)$. In the alternating case, we see from \eqref{E:Alt} and the simple inequality, $n! \geq (n/e)^n$, that $\frac{\log \abs{G_{j_i}}}{m_1(G_{j_i})}$ is unbounded if and only if $r(G_{j_i})$ is unbounded. We see that the same holds for each of those families of Lie type by \eqref{E:An}, \eqref{E:An2}, \eqref{E:Bn}, \eqref{E:Cn}, \eqref{E:Dn}, \eqref{E:Dn2}, and the listed $r$--values.
\end{proof}

The following immediate consequence of Lemma \ref{L:LarsenLemma0} will be used later.

\begin{lemma}\label{L:LarsenLemma}
For every $C>0$, there exists $R(C) \in \N$ such that if $G = \GL(n,\F_q)$ for some $n \in \N$ and finite field $\F_q$, and $\frac{\log \abs{G}}{\log(m_1(G))} \leq C$, then $r^{\FL}(G) \leq R$. Similarly, for every $C>0$, there exists $R(C) \in \N$ such that if $G$ is a finite simple group and $\frac{\log \abs{G}}{\log(m_1(G))} \leq C$, then $r(G) \leq R$.
\end{lemma}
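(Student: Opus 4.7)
The plan is to derive Lemma \ref{L:LarsenLemma} as the direct contrapositive of Lemma \ref{L:LarsenLemma0}. Fixing $C > 0$ and working in case (i), I would suppose for contradiction that no admissible $R$ exists. Then for each positive integer $k$ there must exist a general linear group $G_k = \GL(n_k,\F_{q_k})$ satisfying $\frac{\log\abs{G_k}}{\log m_1(G_k)} \leq C$ while $r^{\FL}(G_k) > k$. Collecting one such $G_k$ for each $k$ produces a sequence $\set{G_k}_{k \in \N}$ of general linear groups obeying the uniform ratio bound $C$ yet with $r^{\FL}(G_k) \to \infty$.

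To invoke Lemma \ref{L:LarsenLemma0}(a) I need the members of the family to be pairwise non-isomorphic, which is handled by passing to a subsequence on which $r^{\FL}(G_k)$ is strictly increasing; distinct values of $r^{\FL}$ rule out isomorphism. The hypothesis of Lemma \ref{L:LarsenLemma0}(a) is then met, and its conclusion forces $\frac{\log\abs{G_k}}{\log m_1(G_k)}$ to be unbounded along the subsequence, contradicting the uniform upper bound $C$. The statement for case (ii) follows by the identical argument applied to a sequence of finite simple groups, invoking Lemma \ref{L:LarsenLemma0}(b) with $r$ in place of $r^{\FL}$.

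There is no genuine obstacle: the statement is presented as an immediate consequence, and all the substantive work sits inside the case-by-case bounds (1)--(8) compiled before Lemma \ref{L:LarsenLemma0}. The only small point requiring attention is the non-isomorphism clause in the family hypothesis of Lemma \ref{L:LarsenLemma0}, which is dispatched by the elementary observation that the integer-valued invariant $r^{\FL}$ (respectively $r$) separates the members of the sequence once it is arranged to be strictly increasing.
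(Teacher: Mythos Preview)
Your proposal is correct and matches the paper's approach: the paper simply declares Lemma~\ref{L:LarsenLemma} to be ``an immediate consequence of Lemma~\ref{L:LarsenLemma0}'' without further argument, and the contrapositive you spell out is precisely that immediate deduction. Your care with the pairwise non-isomorphism clause is a reasonable extra detail that the paper leaves implicit.
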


We require one additional result on finite simple groups.

\begin{lemma}\label{L:PowerControl}
If $\mathcal{F} = \set{G_j}_{j \in \N}$ is set of finite simple groups that has bounded projective rank for some $R \in \N$, then there is a $D(R) \in \N$ such that for each $j \in \N$, there is a finite field $\F_{q_j}$ and an injective homomorphism $\psi_{G_j}\colon G_j \to \PSL(R,\F_{q_j})$ with $\abs{\PSL(R,\F_{q_j})} \leq \abs{G_j}^D$.
\end{lemma}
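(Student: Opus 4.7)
My plan is to proceed via the classification of finite simple groups (CFSG). The hypothesis $r(G_j) \leq R$ sharply restricts which simple groups can appear in $\mathcal{F}$: by the values tabulated in the excerpt, alternating groups $\Alt(n)$ are forced to $n \leq R+2$, sporadic groups form a finite list, and Lie-type groups have bounded Lie rank. So $\mathcal{F}$ splits into finitely many subfamilies, only three of which can be infinite up to isomorphism: cyclic groups $\Z/p\Z$ of prime order, classical Lie-type groups of bounded Lie rank with varying $q$, and each fixed exceptional Lie type (with $r$-value $\leq R$) with varying $q$. Any subfamily containing only boundedly many $G_j$ contributes no difficulty: I pick any faithful projective embedding in dimension $\leq R$ (which exists by hypothesis), standardize to land in $\PSL(R, \F_{q_j})$ as below, and the size inequality then holds automatically for $D$ large.

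For cyclic $G_j = \Z/p\Z$ (the only family with $|G_j|$ unbounded but a single element of each order), I embed $\Z/p\Z \hookrightarrow \PSL(2, \F_p)$ via a unipotent generator of order $p$, then compose with $\PSL(2, \F_p) \hookrightarrow \PSL(R, \F_p)$ via the $(R-1)$-st symmetric power representation, which is faithful after quotienting by the center (the only scalar elements arise from $\pm I \in \SL(2, \F_p)$, and a Schur-style argument shows the image meets $Z(\SL(R, \F_p))$ only in $\{I\}$). The bound $|\PSL(R, \F_p)| \leq p^{R^2} = |G_j|^{R^2}$ is immediate. Small primes are absorbed into the finite-cases bucket.

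For the infinite Lie-type subfamilies, each type admits a canonical faithful projective representation of $G(\F_q)$ landing in $\PGL(n, \F_{q^k})$ with $n \leq R$ and $k \in \{1, 2, 3\}$ determined by the type (untwisted types have $k=1$; $\PSU$ and $\mathrm{P}\Omega^-$ require $k=2$; the triality twist ${}^3\mathrm{D}_4$ requires $k=3$). I convert this to an injection into $\PSL(R, \F_{q'})$ with $q' \leq q^{c(R)}$ by (a) lifting to a linear representation of a central extension $\tilde{G}$ of $G$, whose Schur multiplier is bounded in each family; (b) padding the underlying module by a direct sum with the trivial module, then twisting by a determinant-normalizing scalar after a small field extension to land in $\SL(R, \F_{q'})$; and (c) descending to $\PSL(R, \F_{q'})$, where the kernel is forced to be trivial because $G$ is simple and the original projective representation was faithful. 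Size control reduces to the polynomial relation $|G(\F_q)| \geq q^{a(R)}$ with $a(R) > 0$ from the order tables in the excerpt, giving $|\PSL(R, \F_{q'})| \leq (q')^{R^2} \leq q^{cR^2} \leq |G_j|^D$ with $D = \lceil cR^2/a \rceil$.

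The main obstacle I expect is step (b): standardizing to dimension exactly $R$ while preserving the $\PSL$-structure. The diagonal inclusion $\SL(n) \hookrightarrow \SL(R)$ via $A \mapsto \mathrm{diag}(A, I_{R-n})$ does not descend to $\PSL$ unless $n \mid R$, and the tensor-product alternative $A \mapsto A \otimes I_{R/n}$ also requires $n \mid R$. To handle general $n \leq R$, I plan to pass first to a central extension and then through a finite field extension of degree depending only on $R$, absorbing the scalar discrepancies into the enlarged base field; since the extension degree stays bounded in terms of $R$, the field size $q'$ remains polynomial in $q$, and the polynomial size inequality survives. Verifying this uniformly across all Lie types (especially those where the natural dimension $n$ genuinely varies with the group, forcing several subcases) is the technical crux.
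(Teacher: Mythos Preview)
Your case split via CFSG is exactly the paper's approach: a finite bucket of sporadic/alternating groups handled by taking $D$ large, cyclic groups embedded over $\F_p$, and Lie-type groups handled family by family with $|G| \geq q^{a}$ giving the polynomial size bound. The paper's proof is in fact terser than yours and simply asserts the existence of $G \hookrightarrow \PSL(R,\F_q)$ for Lie type, citing the tabulated data; you are right that this step hides work.

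However, your proposed resolution of the dimension-standardization step has a gap. If you lift $G \hookrightarrow \PGL(n,F)$ to $\tilde\rho\colon \tilde G \to \GL(n,F)$ and then pad to $\mathrm{diag}(\tilde\rho,I_{R-n})\colon \tilde G \to \SL(R,F)$, the composite with $\SL(R,F)\to\PSL(R,F)$ has \emph{trivial} kernel: the only block-diagonal matrix $\mathrm{diag}(A,I_{R-n})$ that is scalar is $I_R$. So you have embedded $\tilde G$, not $G$, into $\PSL(R,F)$, and the map does not factor through $G=\tilde G/Z$. Your ``twisting by a determinant-normalizing scalar'' cannot repair this: the scalar would have to come from a character $\tilde G \to (F')^\times$, and for non-abelian simple $G$ any stem cover $\tilde G$ is perfect, so there is no such character. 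Concretely, $\PSL(3,q)$ with $3\mid q-1$ has no faithful $5$-dimensional projective representation in the defining characteristic, since every $5$-dimensional $\SL(3,q)$-module (a sum of modules of dimensions $1,3$) carries a mixed central character.

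Two clean fixes are available. First, you may enlarge $R$ at the outset (which is permissible, since bounded projective rank $\leq R$ implies bounded projective rank $\leq R'$ for any $R'\geq R$): taking $R'=\lcm(1,\dots,R)$, the map $A\mapsto A^{\oplus R'/n}$ on $\SL(n,F')$ sends $Z(\SL(n,F'))$ into $Z(\SL(R',F'))$ and hence descends to an injection $\PSL(n,F')\hookrightarrow \PSL(R',F')$. Second, you can bypass the projective lift entirely via the conjugation action $\PGL(n,F)\hookrightarrow \GL(n^2,F)$, which gives a faithful \emph{linear} embedding of $G$; since $G$ is perfect the image lies in $\SL(n^2,F)$, and now padding by $I_{R'-n^2}$ and projecting to $\PSL(R',F)$ is injective on $G$ for any $R'\geq R^2$. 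Either route yields the size bound exactly as you outlined, and the resulting $R'$ (rather than $R$) is all that is needed in the paper's sole application of the lemma.
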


\begin{proof}
From \eqref{E:Alt}, we see that only finitely many alternating groups can be in $\mathcal{F}$ and set $\mathcal{F}_0$ to be the finite subset of sporadic and alternating groups in $\mathcal{F}$. There exists $D_1(\mathcal{F}_0) \in \N$ such that for each $G \in \mathcal{F}_0$, there exists a finite field $\F_q$ and an injective homomorphism $\psi_G\colon G \to \PSL(R,\F_q)$ such that $\abs{\PSL(R,\F_q)} \leq \abs{G}^{D_1}$. For any $G \in \mathcal{F} - \mathcal{F}_0$, either $G$ is cyclic of prime order or of Lie type. If $G$ is cyclic of prime order $p$, we have an injective homomorphism $\psi_G\colon G \to \PSL(2,\F_p)$ and so $\abs{\PSL(2,\F_p)} < \abs{G}^4$. If $G$ is of Lie type with associated finite field $\F_q$, since $r(G)\leq R$, we see from the data in Subsection \ref{SS:FG} that there exists $D_2(R) \in \N$ and an injective homomorphism $\psi_G\colon G \to \PSL(R,\F_q)$ such that $\abs{\PSL(R,\F_q)} \leq \abs{G}^{D_2}$; $D_2 = R^2$ works. Finally, set $D = \max\set{D_1,D_2,4}$.
\end{proof}

\section{Infinite Groups}

In this section, we establish the main technical result Proposition \ref{P:MainTechProp-GL}. From Proposition \ref{P:MainTechProp-GL}, we obtain Corollary \ref{C:Main} which we will use in the proof of Theorem \ref{T:Main}. We also introduce malabelian and uniformly malabelian groups.

Given a group $\Gamma$ and a set $\mathcal{F} = \set{G_j}_{j\in \N}$ of finite groups, we say that $\Gamma$ is \textbf{residually $\mathcal{F}$} if for each non-trivial $\gamma \in \Gamma$, there exists $G \in \mathcal{F}$ and a homomorphism $\varphi\colon \Gamma \to G$ such that $\varphi(\gamma) \ne 1$. We say that $\Gamma$ is \textbf{fully residually $\mathcal{F}$} if for each finite set $T \su \Gamma - \set{1}$, there exists $G \in \mathcal{F}$ and a homomorphism $\varphi\colon \Gamma \to G$ such that $\ker \varphi \cap T = \emptyset$. If $\mathcal{F} = \set{\GL(n,\F_q)}$ or $\set{\PGL(n,\F_q)}$ for $n \in \N$ fixed and varying finite fields $\F_q$, we will simply say $\Gamma$ is (fully) residually $\GL_n$ or $\PGL_n$; these families have bounded rank, bounded projective rank, respectively. The following result is also well-known (see Theorem 3.11 in \cite{Hal95} for instance).

\begin{lemma}\label{L:UltraLemmaApp}
Let $\Gamma$ be a countable group and $\mathcal{F} = \set{G_j}_{j \in \N}$ be a set of finite groups that has bounded rank or bounded projective rank. If $\Gamma$ is fully residually $\mathcal{F}$, then there exists $n \in \N$, a field $\mathbf{K}$, and an injective homomorphism $\varphi\colon \Gamma \to \GL(n,\mathbf{K})$.
\end{lemma}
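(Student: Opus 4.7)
The plan is to use the standard ``diagonal homomorphism into an ultraproduct'' argument, with Lemma \ref{L:UltraLemma} supplying the final linear representation. First, since $\Gamma$ is countable, I would enumerate $\Gamma - \set{1} = \set{\gamma_1,\gamma_2,\dots}$. For each $k \in \N$, apply the full residual $\mathcal{F}$ hypothesis to the finite set $T_k = \set{\gamma_1,\dots,\gamma_k}$ to obtain an index $j_k$ and a homomorphism $\varphi_k\colon \Gamma \rw G_{j_k}$ with $\varphi_k(\gamma_i) \ne 1$ for all $i \leq k$.

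Next, fix a non-principal ultrafilter $\omega$ on $\N$ and form the ultraproduct $H = \prod_\omega G_{j_k}$. The reindexed family $\set{G_{j_k}}_{k \in \N}$ still has bounded rank (resp. bounded projective rank) for the same $R$, since the rank invariants $r^{\FL}$ and $r$ depend only on the isomorphism class. Hence Lemma \ref{L:UltraLemma} supplies $n \in \N$, a field $\mathbf{K}$, and an injective homomorphism $\psi\colon H \hrw \GL(n,\mathbf{K})$.

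Now define the diagonal map $\Phi\colon \Gamma \rw H$ by $\Phi(\gamma) = [(\varphi_k(\gamma))_{k\in\N}]_\omega$. This is clearly a homomorphism. For injectivity, let $\gamma \in \Gamma - \set{1}$, so $\gamma = \gamma_m$ for some $m$. By construction, $\varphi_k(\gamma_m) \ne 1$ for every $k \geq m$, and the set $\set{k \in \N : k \geq m}$ is cofinite, hence lies in $\omega$ because $\omega$ is non-principal. Therefore $\Phi(\gamma) \ne 1$ in $H$, and the composition $\psi \circ \Phi\colon \Gamma \hrw \GL(n,\mathbf{K})$ is the desired injective linear representation.

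There is no serious obstacle: the only minor points to double-check are (i) that reindexing along the subsequence $(G_{j_k})$ preserves the bounded rank hypothesis of Lemma \ref{L:UltraLemma}, which is immediate, and (ii) the standard verification that $\Phi$ is well-defined and a homomorphism, which follows from componentwise multiplication in the ultraproduct. The essential use of ``fully'' (as opposed to merely) residually $\mathcal{F}$ is exactly what turns each individual $\varphi_k$ into a witness of non-triviality on an initial segment, so that a cofinite set of coordinates sees any given non-trivial $\gamma$ as non-trivial.
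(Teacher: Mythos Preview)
Your proposal is correct and is essentially the same argument as the paper's: enumerate $\Gamma$, use the fully residually $\mathcal{F}$ hypothesis to choose $\varphi_k$ killing none of the first $k$ non-trivial elements, map diagonally into the ultraproduct, and invoke Lemma \ref{L:UltraLemma}. You give more detail than the paper does on the injectivity of the diagonal map and on why reindexing preserves the bounded rank hypothesis, but the strategy is identical.
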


\begin{proof}
We begin by enumerating $\Gamma = \set{\gamma_0,\gamma_1,\dots,}$ with $\gamma_0 = 1$ and set $T_i = \set{\gamma_k}_{k=1}^i$. By assumption, for each $i \in \N$, there is a $j_i \in \N$ and a homomorphism $\varphi_{j_i}\colon \Gamma \to G_{j_i}$ such that $\varphi_{j_i}(\gamma) \ne 1$ for all $\gamma \in T_i$. Setting $\mathcal{F}_0 = \set{G_{j_i}}_{i \in \N}$, for any non-principal ultrafilter $\omega$ on $\N$, we have an injective homomorphism $\varphi_\omega\colon \Gamma \to \prod_\omega G_{j_i}$. The proof is completed with an application of Lemma \ref{L:UltraLemma}.
\end{proof}

\subsection{Malabelian Groups}

Given a finite subset $T \su \Gamma - \set{1}$, we have an associated normal subgroup $\Delta_T = \bigcap_{\gamma \in T} \overline{\innp{\gamma}}$, where $\overline{\innp{\gamma}}$ denotes the normal closure of the cyclic subgroup $\innp{\gamma}$. We call any non-trivial element in $\Delta_T$ a \textbf{common multiple} for $T$ in $\Gamma$. The following lemma can be found in \cite[Lem 3.1]{BM11}.

\begin{lemma}\label{L:CMProp}
Let $\Gamma$ be a group, $T \su \Gamma - \set{1}$ a finite subset, and $\eta$ a common multiple for $T$ in $\Gamma$. If $G$ is a group and $\varphi\colon \Gamma \to G$ is a homomorphism with $\varphi(\eta) \ne 1$, then $\varphi(\gamma) \ne 1$ for all $\gamma \in T$.
\end{lemma}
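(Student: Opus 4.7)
The plan is to prove the lemma by contraposition. Suppose, toward a contradiction, that there exists some $\gamma_0 \in T$ with $\varphi(\gamma_0) = 1$, i.e., $\gamma_0 \in \ker \varphi$. The key observation is that $\ker \varphi$ is a normal subgroup of $\Gamma$, so it must contain the entire normal closure $\overline{\innp{\gamma_0}}$, since by definition the normal closure is the smallest normal subgroup of $\Gamma$ containing $\gamma_0$.

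Next, I would unwind the hypothesis that $\eta$ is a common multiple for $T$. By definition, this means
\[ \eta \in \Delta_T = \bigcap_{\gamma \in T} \overline{\innp{\gamma}}, \]
so in particular $\eta \in \overline{\innp{\gamma_0}}$. Combining this with the previous paragraph gives $\eta \in \ker \varphi$, whence $\varphi(\eta) = 1$. This contradicts the assumption that $\varphi(\eta) \ne 1$, completing the contrapositive.

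There is no real obstacle here; the argument is essentially a definitional unpacking, and the entire content of the lemma is that the kernel of a homomorphism is normal, so it swallows the normal closure of any element it contains. The only care needed is to make sure to single out a specific $\gamma_0 \in T$ (rather than confuse oneself with the universal quantifier), and to use the correct direction of the inclusion $\Delta_T \subseteq \overline{\innp{\gamma_0}}$ (intersection is contained in each factor).
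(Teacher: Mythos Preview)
Your argument is correct; the lemma is indeed just a definitional unpacking of normal closure and kernel, exactly as you wrote. Note that the paper does not supply its own proof of this statement but simply cites \cite[Lem~3.1]{BM11}; your contrapositive argument is the standard (and essentially only) way to see it.
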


We say that a group $\Gamma$ is \textbf{malabelian} if for any pair of (not necessarily distinct) non-trivial elements $\gamma,\eta \in \Gamma$, there exists $\mu \in \Gamma$ such that $[\mu^{-1}\gamma\mu,\eta] \ne 1$. In other words, every non-trivial conjugacy class in $\Gamma$ has a trivial centralizer.

\begin{lemma}\label{L:CM-Exist}
If $\Gamma$ is a malabelian group and $T \su \Gamma - \set{1}$ a finite subset, then $\Delta_T \ne 1$ and $T$ has a common multiple.
\end{lemma}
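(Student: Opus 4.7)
The plan is to prove $\Delta_T \neq 1$ by induction on $\abs{T}$; once this is established, any nontrivial element of $\Delta_T$ serves as a common multiple by definition, settling the second claim.

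For the base case $\abs{T} = 1$, writing $T = \set{\gamma}$, we have $\Delta_T = \overline{\innp{\gamma}} \supseteq \innp{\gamma} \neq 1$. For the inductive step, suppose $T = T' \cup \set{\gamma}$ with $\gamma \notin T'$ and $\Delta_{T'} \neq 1$. Pick any nontrivial $\eta \in \Delta_{T'}$. Since $\Gamma$ is malabelian, there exists $\mu \in \Gamma$ such that the commutator
\[
c = [\mu^{-1}\gamma\mu,\eta] = (\mu^{-1}\gamma\mu)\,\eta\,(\mu^{-1}\gamma^{-1}\mu)\,\eta^{-1}
\]
is nontrivial. The heart of the argument is to show $c \in \Delta_T = \Delta_{T'} \cap \overline{\innp{\gamma}}$, which, combined with $c \neq 1$, finishes the induction.

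To see $c \in \overline{\innp{\gamma}}$, group the factors as $(\mu^{-1}\gamma\mu) \cdot \bigl(\eta (\mu^{-1}\gamma^{-1}\mu) \eta^{-1}\bigr)$; both factors are conjugates of powers of $\gamma$ and hence lie in the normal subgroup $\overline{\innp{\gamma}}$. To see $c \in \Delta_{T'}$, regroup as $\bigl((\mu^{-1}\gamma\mu)\eta(\mu^{-1}\gamma\mu)^{-1}\bigr) \cdot \eta^{-1}$; the first factor is a conjugate of $\eta$ and the second is $\eta^{-1}$, and since $\Delta_{T'}$ is an intersection of normal subgroups and therefore normal, both factors lie in $\Delta_{T'}$. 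Thus $c \in \Delta_T$ is a nontrivial common multiple for $T$.

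There is no real obstacle here: the only subtlety is choosing the right bracketing of the four-term product $c$ to exhibit membership in each normal closure, and the malabelian hypothesis is used exactly once, with the pair $(\gamma,\eta)$, to guarantee $c \neq 1$.
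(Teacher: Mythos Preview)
Your proof is correct. The paper takes a slightly different route: rather than adding one element at a time, it enlarges $T$ to size $2^j$, then pairs the elements and forms commutators $\gamma_{i,1}=[\mu_i^{-1}\gamma_{2i-1}\mu_i,\gamma_{2i}]$ to obtain a set $T_1$ of half the size, and inducts on $j$. Both arguments rest on the same observation that such a commutator lies in $\overline{\innp{\gamma_{2i-1}}}\cap\overline{\innp{\gamma_{2i}}}$, so for the bare existence statement of this lemma your one-at-a-time induction is just as good and arguably cleaner. The payoff of the paper's halving strategy appears only in the quantitative sequel, Lemma~\ref{L:LCM-Est}: each halving step at worst multiplies word length by a constant, so after $\log_2 t$ steps the common multiple has length $O(dt^2)$, whereas your recursion $\norm{c}\leq 2\norm{\eta}+O(d)$ iterated $t$ times yields a bound exponential in $t$. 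Since the polynomial bound is exactly what is needed in Proposition~\ref{P:MainTechProp-GL}, the halving construction is doing real work later on, even though it is unnecessary here.
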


\begin{proof}
Without loss of generality, we can assume that $\abs{T} = 2^j$ for some integer $j$ as we can enlarge $T$ by adding non-trivial elements from $\Gamma$. We proceed via induction on $j$. Enumerating $T = \set{\gamma_1,\dots,\gamma_{2^j}}$, for each $1 \leq i \leq 2^{j-1}$, we select $\mu_i \in \Gamma$ such that $\gamma_{i,1} = [\mu_i^{-1}\gamma_{2i-1}\mu_i,\gamma_{2i}] \ne 1$. We obtain a new subset $T_1 = \set{\gamma_{i,1}}_{i=1}^{2^{j-1}}$. By induction, $T_1$ has a common multiple in $\Gamma$ and it is follows from our selection of $\gamma_{i,1}$ that this common multiple is also a common multiple for $T$.
\end{proof} 

\begin{lemma}\label{L:ResGL-UniResGL}
If $\Gamma$ is a countable, malabelian group that is residually $\mathcal{F}$ for some set $\mathcal{F}$ of finite groups, then $\Gamma$ is fully residually $\mathcal{F}$.
\end{lemma}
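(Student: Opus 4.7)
The proof should be an immediate synthesis of Lemma \ref{L:CMProp} and Lemma \ref{L:CM-Exist}; there is essentially no obstacle to overcome, only a short assembly.

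The plan is as follows. Take an arbitrary finite subset $T \su \Gamma - \set{1}$; to show that $\Gamma$ is fully residually $\mathcal{F}$, I must produce some $G \in \mathcal{F}$ and a homomorphism $\varphi\colon \Gamma \to G$ whose kernel misses every element of $T$. Since $\Gamma$ is malabelian, Lemma \ref{L:CM-Exist} supplies a non-trivial common multiple $\eta \in \Delta_T$ for $T$ in $\Gamma$. Because $\Gamma$ is residually $\mathcal{F}$ and $\eta \ne 1$, the residual hypothesis applied to the single element $\eta$ yields some $G \in \mathcal{F}$ and a homomorphism $\varphi \colon \Gamma \to G$ with $\varphi(\eta) \ne 1$.

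Now I invoke Lemma \ref{L:CMProp} with this $\varphi$ and this common multiple $\eta$ of $T$: the conclusion is exactly that $\varphi(\gamma) \ne 1$ for every $\gamma \in T$, i.e., $\ker \varphi \cap T = \emptyset$. Since $T$ was arbitrary, $\Gamma$ is fully residually $\mathcal{F}$, completing the proof.

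The countability hypothesis in the statement is not actually used in this argument (it will be used in Lemma \ref{L:UltraLemmaApp} when passing to an ultraproduct), and likewise no assumption on $\mathcal{F}$ beyond the residual property is invoked. The only nontrivial ingredient is the existence of a common multiple, which is precisely what the malabelian hypothesis is designed to guarantee via the inductive commutator construction in Lemma \ref{L:CM-Exist}; once this is in hand, promoting residual to fully residual is the standard trick of reducing separation of finitely many elements to separation of a single element.
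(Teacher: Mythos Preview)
Your proof is correct and matches the paper's argument essentially verbatim: pick a common multiple for $T$ via Lemma \ref{L:CM-Exist}, separate it from $1$ using the residual hypothesis, and then apply Lemma \ref{L:CMProp} to conclude that all of $T$ is separated. Your observation that countability is unused here is also accurate.
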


\begin{proof}
Given a finite subset $T \su \Gamma - \set{1}$, since $\Gamma$ is malabelian, by Lemma \ref{L:CM-Exist} there exists a common multiple $\gamma_T$ for $T$ in $\Gamma$. By assumption, there exists $G \in \mathcal{F}$ and a homomorphism $\varphi_T\colon \Gamma \to G$ such that $\varphi_T(\gamma_T) \ne 1$. By Lemma \ref{L:CMProp}, $\varphi_T(\gamma) \ne 1$ for all $\gamma \in T$ and thus $\Gamma$ is fully residually $\mathcal{F}$.
\end{proof}

\subsection{Uniformly Malabelian Groups}

For a finitely generated group $\Gamma$, a fixed finite generating subset $X$, and $K \in \N$, we say that $\Gamma$ is \textbf{$K$--malabelian with respect to $X$} if for every pair of non-trivial $\gamma,\eta \in \Gamma$, there exists $\mu \in \Gamma$ with $\norm{\mu}_X \leq K$ such that $[\mu^{-1}\gamma\mu,\eta] \ne 1$. If $\Gamma$ is $K$--malabelian with respect to a finite generating subset $X$ and $Y$ is another finite generating subset for $\Gamma$, then $\Gamma$ is $K'$--malabelian with respect to $Y$ for some $K' \in \N$. We say that $\Gamma$ is \textbf{uniformly malabelian} if $\Gamma$ is $K$--malabelian with respect to $X$ for some finite generating subset $X$ and $K \in \N$. For a finite subset $T \su \Gamma - \set{1}$, if $T$ has a common multiple in $\Gamma$, we define the \textbf{least common multiple length} of $T$ relative to $X$ to be $\mathrm{LCM}_X(T) = \min_{\eta \in \Delta_T - \set{1}} \norm{\eta}_X$. We refer to any element $\eta \in \Delta_T$ with $\norm{\eta}_X = \mathrm{LCM}_X(T)$ as a \textbf{least common multiple} for the subset $T$.

\begin{lemma}\label{L:LCM-Est}
If $\Gamma$ is a finitely generated, uniformly malabelian group and $X$ a finite generating subset, then there exists $C(X) \in \N$ such that if $T \su \Gamma - \set{1}$ with $\abs{T} = t$ and $\norm{\gamma}_X \leq d$ for all $\gamma \in T$, then $\mathrm{LCM}_X(T) \leq Cdt^2$.
\end{lemma}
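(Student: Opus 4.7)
The plan is to imitate the doubling induction of Lemma \ref{L:CM-Exist} while keeping a careful accounting of $X$--word lengths. Since $\Gamma$ is uniformly malabelian with respect to $X$, fix $K \in \N$ for which $\Gamma$ is $K$--malabelian with respect to $X$. First, enumerate $T$ as a list of length $t' = 2^j$, where $t'$ is the smallest power of two with $t' \geq t$ (so $t' \leq 2t$ and $j \leq 1 + \log_2 t$), padding the list by repeating a fixed element of $T$; every entry of the list is still an element of $T$ and has $X$--length at most $d$, so any element in the intersection of normal closures of the padded list will lie in $\Delta_T$.

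At each doubling stage, we have $2k$ nontrivial elements $\alpha_1,\dots,\alpha_{2k}$ with $\norm{\alpha_i}_X \leq d_i$. For each pair $(\alpha_{2\ell-1},\alpha_{2\ell})$, the $K$--malabelian hypothesis supplies $\mu_\ell \in \Gamma$ with $\norm{\mu_\ell}_X \leq K$ and $\alpha'_\ell := [\mu_\ell^{-1}\alpha_{2\ell-1}\mu_\ell,\alpha_{2\ell}] \ne 1$. Expanding
\[ \alpha'_\ell = \mu_\ell^{-1}\alpha_{2\ell-1}\mu_\ell\,\alpha_{2\ell}\,\mu_\ell^{-1}\alpha_{2\ell-1}^{-1}\mu_\ell\,\alpha_{2\ell}^{-1} \]
gives $\norm{\alpha'_\ell}_X \leq 4K + 4 d_i$. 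Setting $d_0 = d$ and $d_{i+1} = 4K + 4 d_i$, the linear recursion solves to $d_i = 4^i d + \tfrac{4K}{3}(4^i - 1)$.

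After $j$ rounds the list has collapsed to a single nontrivial element $\eta$. Exactly as in Lemma \ref{L:CM-Exist}, each $\alpha'_\ell$ is simultaneously a product of conjugates of $\alpha_{2\ell-1}^{\pm 1}$ and a product of conjugates of $\alpha_{2\ell}^{\pm 1}$, so $\alpha'_\ell \in \overline{\innp{\alpha_{2\ell-1}}} \cap \overline{\innp{\alpha_{2\ell}}}$; an induction on the stages then gives $\eta \in \bigcap_{\gamma \in T} \overline{\innp{\gamma}} = \Delta_T$. Since $4^j = (t')^2 \leq 4 t^2$ and $d \geq 1$, we conclude
\[ \mathrm{LCM}_X(T) \leq \norm{\eta}_X \leq d_j \leq 4 t^2 d + \tfrac{16K}{3} t^2 \leq (4 + 6K)\, t^2\, d, \]
yielding the claim with $C = 4 + 6K$, depending only on $X$ through $K$.

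The main obstacle is purely bookkeeping: verifying that the commutator expansion scales word length by at most four (up to the additive $4K$), and then solving the resulting linear recursion to capture the $t^2 d$ dependence. No conceptually new input beyond Lemma \ref{L:CM-Exist} and the uniform constant $K$ supplied by uniform malabelianity is required.
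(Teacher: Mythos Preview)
Your argument is correct and is precisely the approach the paper has in mind: it omits the proof as ``similar to the proof of Proposition 4.1 in \cite{BM11}'', which is exactly this doubling construction from Lemma \ref{L:CM-Exist} together with the length bookkeeping $d_{i+1} = 4d_i + 4K$ that you carry out. The only cosmetic issue is the reuse of the index $i$ for both elements and stages in the line ``$\norm{\alpha_i}_X \leq d_i$''; replacing the stage bound there by a single symbol (say $d_s$) would remove the ambiguity.
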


As the proof of Lemma \ref{L:LCM-Est} is similar to the proof of Proposition 4.1 in \cite{BM11}, we have omitted it. 

\begin{proposition}\label{P:MainTechProp-GL}
Let $\Gamma$ be a finitely generated, uniformly malabelian group with an element of infinite order. 
\begin{itemize}
\item[(a)] 
If there exists $D \in \N$ such that $\DM_{\Gamma,\FL}(m) \preceq m^D$, then there exists $n(D) \in \N$ such that $\Gamma$ is residually $\GL_n$.
\item[(b)]
If there exists $D' \in \N$ such that $\DM_{\Gamma,\M}(m) \preceq m^D$, then there exists $R(D') \in \N$ such that $\Gamma$ is residually $\mathcal{F}$ for a set of finite simple groups that has bounded projective rank for $R$.
\end{itemize}
\end{proposition}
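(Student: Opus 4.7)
The plan is to exploit the uniformly malabelian property to upgrade the polynomial bound on a single element's growth function into simultaneous nontriviality of many powers of an infinite-order element; this forces the image group $G$ to contain an element of order comparable to the word length, while the polynomial bound simultaneously constrains $|G|$. The ratio $\log|G|/\log m_1(G)$ is then bounded, so Lemma \ref{L:LarsenLemma} pins down the rank or projective rank of $G$ uniformly in the original element.

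Concretely, fix a non-trivial $\gamma \in \Gamma$, an element $\alpha \in \Gamma$ of infinite order, and a finite generating subset $X$ with respect to which $\Gamma$ is $K$-malabelian. For each integer $N \geq \norm{\gamma}_X$, form the finite set
\[ T_N = \set{\gamma, \alpha, \alpha^2, \dots, \alpha^N} \su \Gamma - \set{1}. \]
Every element of $T_N$ has $X$-length at most $c_1 N$ for a constant $c_1 = c_1(\alpha, \gamma) \leq \norm{\alpha}_X + 1$ once $N$ is large, and $\abs{T_N} = N+1$. Since $\Gamma$ is uniformly malabelian, Lemma \ref{L:LCM-Est} produces a common multiple $\eta_N \in \Delta_{T_N}$ with $\norm{\eta_N}_X \leq c_2 N^3$, where $c_2$ depends only on $X$ and $\alpha$.

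For part (a), the hypothesis $\DM_{\Gamma,\FL}(m) \preceq m^D$ yields a homomorphism $\varphi_N \colon \Gamma \to G_N := \GL(n_N, \F_{q_N})$ with $\varphi_N(\eta_N) \neq 1$ and $\abs{G_N} \leq c_3 N^{3D}$ for a constant $c_3 = c_3(D, X, \alpha)$ independent of $\gamma$. By Lemma \ref{L:CMProp}, $\varphi_N(\alpha^i) \neq 1$ for every $1 \leq i \leq N$, so the order of $\varphi_N(\alpha)$ exceeds $N$ and $m_1(G_N) > N$. Consequently
\[ \frac{\log\abs{G_N}}{\log m_1(G_N)} \leq \frac{3D \log N + \log c_3}{\log N} \leq 3D + 1 \]
for all $N$ larger than a threshold depending only on $c_3$. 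Lemma \ref{L:LarsenLemma} then gives $r^{\FL}(G_N) \leq R$ for a uniform $R = R(3D+1)$, and composing $\varphi_N$ with the embedding $G_N \hookrightarrow \GL(R, \mathbf{K})$ supplied by $r^{\FL}(G_N) \leq R$ produces a homomorphism $\Gamma \to \GL(R, \mathbf{K})$ that does not kill $\gamma$. Taking $n(D) = R(3D+1)$, one concludes that $\Gamma$ is residually $\GL_{n(D)}$.

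Part (b) follows from the same argument with the simple-group versions of the inputs: $\DM_{\Gamma,\M}(m) \preceq m^{D'}$ gives a homomorphism $\psi_N \colon \Gamma \to H_N$ with $H_N$ finite simple, $\abs{H_N} \leq c'_3 N^{3D'}$, and $m_1(H_N) > N$, whence the simple-group half of Lemma \ref{L:LarsenLemma} yields $r(H_N) \leq R' = R(3D'+1)$ uniformly. The set $\mathcal{F} = \set{H_N(\gamma)}_{\gamma \in \Gamma-\set{1}}$ is then a collection of finite simple groups of bounded projective rank $R'$, and $\Gamma$ is residually $\mathcal{F}$ by construction. The only real subtlety — not so much an obstacle — is verifying that the constants $c_1, c_2, c_3$ emerging from Lemma \ref{L:LCM-Est} and the $\preceq$-hypothesis depend on $(\Gamma, X, \alpha, D)$ but not on $\gamma$, so that a single threshold on $N$ yields a uniform bound on $\log\abs{G}/\log m_1(G)$ across all non-trivial $\gamma$.
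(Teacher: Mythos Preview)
Your proposal is correct and follows essentially the same approach as the paper's proof. The one notable difference is that the paper places the commutator $[\mu_0^{-1}\gamma\mu_0,\gamma_0]$ (with $\norm{\mu_0}_X\leq K$) into $T_j$ rather than $\gamma$ itself; this is not needed for Proposition~\ref{P:MainTechProp-GL} as stated, but it buys the additional conclusion that $\varphi_j(\gamma)$ is non-central in $\varphi_j(\Gamma)$, which the paper invokes in the ``Moreover'' clause of Corollary~\ref{C:MainLin}.
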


\begin{proof}
Before proving (a) and (b), we require some common setup for both. We first fix an infinite order element $\gamma_0$ and a finite generating subset $X$.  Since $\Gamma$ is uniformly malabelian, there exists $K \in \N$ such that $\Gamma$ is $K$--malabelian with respect to $X$. Given a non-trivial $\gamma \in \Gamma$, there exists $\mu_0 \in \Gamma$ such that $[\mu_0^{-1}\gamma\mu_0,\gamma_0] \ne 1$ and $\norm{\mu_0}_X \leq K$. Set $T_j = \set{[\mu_0^{-1}\gamma\mu_0,\gamma_0],\gamma_0^2,\dots,\gamma_0^j}$ and note that $\abs{T_j} = j$. Moreover, there exists $j(\gamma) \in \N$ such that if $j \geq j(\gamma)$, then $\norm{\tau}_X \leq j\norm{\gamma_0}_X$ for all $\tau \in T_j$. By Lemma \ref{L:LCM-Est}, there exists $C_1(X) \in \N$ such that if $\eta_j$ is a least common multiple of $T_j$ and $j \geq j(\gamma)$, then
\begin{equation}\label{E:LCM-Est}
\norm{\eta_j}_X \leq C_1\norm{\gamma_0}_Xj^3 = C_2j^3,
\end{equation} 
where $C_2 = C_1\norm{\gamma_0}_X$.

Part (a). We must show that there exists $n \in \N$ such that for every non-trivial $\gamma \in \Gamma$, there exists a finite field $\F_q$ and a homomorphism $\varphi\colon \Gamma \to \GL(n,\F_q)$ such that $\varphi(\gamma) \ne 1$.
By assumption, there exists $C_3 \in \N$ such that for each $j \in \N$, there exists $n_j \in \N$, a finite field $\F_{q_j}$, and a homomorphism $\varphi_j\colon \Gamma \to \GL(n_j,\F_{q_j})$ with $\varphi_j(\eta_j) \ne 1$ and 
\begin{equation}\label{E:GL-Est}
\abs{\GL(n_j,\F_{q_j})} \leq C_3\norm{\eta_j}_X^D. 
\end{equation}

Combining \eqref{E:LCM-Est}, \eqref{E:GL-Est}, for $C_4 = C_2^DC_3$ and $j \geq j(\gamma)$, we see that $\abs{\GL(n_j,\F_{q_j})} \leq C_4j^{3D}$. By construction, $m_1(\GL(n_j,\F_{q_j})) \geq j$ and so 
\begin{equation}\label{E:Est1}
\frac{\log\abs{\GL(n_j,F_{q_j})}}{\log(m_1(\GL(n_j,F_{q_j})))} \leq 3D + \frac{\log(C_4)}{\log j}.
\end{equation}
Since the right-hand side of \eqref{E:Est1} is uniformly bounded above, by Lemma \ref{L:LarsenLemma}, there exists $R(D,C_4) \in \N$ such that $r^{\FL}(\GL(n_j,F_{q_j})) \leq R$ for all $j \geq j(\gamma)$. Note that since the right-hand side of \eqref{E:Est1} is independent of $\Gamma$, the constant $R(D,C_4)$ is independent of $\Gamma$. Setting $n=R(D,C_4)$, we see that $\Gamma$ is residually $\GL_n$ since $R(D,C_4)$ is independent of $\gamma$.

Part (b). By assumption, there exists $C_5 \in \N$ such that for each $j \in \N$, there exists a finite simple group $G_j$ and  homomorphism $\varphi_j\colon \Gamma \to G_j$ with $\varphi_j(\eta_j) \ne 1$ and 
\begin{equation}\label{E:Sim-Est}
\abs{G_j} \leq C_5\norm{\eta_j}_X^{D}.
\end{equation}
Combining \eqref{E:LCM-Est}, \eqref{E:Sim-Est}, for $C_6=C_2^{D}C_5$ and $j \geq j(\gamma)$, we have $\abs{G_j} \leq C_6j^{3D}$. As before, $m_1(G_j) \geq j$ and so
\begin{equation}\label{E:Format2}
\frac{\log\abs{G_j}}{\log(m_1(G_j))} \leq 3D + \frac{\log(C_6)}{\log j}.
\end{equation}
Since the right-hand of \eqref{E:Format2} is uniformly bounded above, by Lemma \ref{L:LarsenLemma}, there exists $R(D,C_6) \in \N$ such that $r(G_j) \leq R$ for all $j \geq j(\gamma)$. Note that since the right-hand side of \eqref{E:Format2} is independent of $\Gamma$, the constant $R(D,C_6)$ is independent of $\Gamma$. Consequently,  $\Gamma$ is residually $\mathcal{F}$ for some set of finite simple groups that has bounded projective rank.
\end{proof}

\begin{corollary}\label{C:MainLin}
Let $\Gamma$ be a finitely generated, uniformly malabelian group with an element of infinite order. 
\begin{itemize}
\item[(a)]
If there exists $D \in \N$ such that $\DM_{\Gamma,\FL}(m) \preceq m^D$, then there exists $n(D) \in \N$, a field $\mathbf{K}$, and an injective homomorphism $\varphi\colon \Gamma \to \GL(n,\mathbf{K})$. Moreover, $\varphi(\Gamma) \cap (\mathbf{K}^\times \cdot \mathrm{I}_n) = \mathrm{I}_n$.
\item[(b)]
If there exists $D' \in \N$ such that $\DM_{\Gamma,\M}(m) \preceq m^{D'}$, then there exists $n(D') \in \N$, a field $\mathbf{K}$, and an injective homomorphism $\varphi\colon \Gamma \to \GL(n,\mathbf{K})$. Moreover, $\varphi(\Gamma) \cap (\mathbf{K}^\times \cdot \mathrm{I}_n) = \mathrm{I}_n$.
\end{itemize}
\end{corollary}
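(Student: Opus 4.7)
The plan is to chain together the results already developed in the previous sections. Given the hypotheses, we first apply Proposition \ref{P:MainTechProp-GL}: in case (a) we obtain that $\Gamma$ is residually $\GL_n$ for some $n = n(D)$, and in case (b) we obtain that $\Gamma$ is residually $\mathcal{F}$ for some set $\mathcal{F}$ of finite simple groups having bounded projective rank $R = R(D')$. In case (b), we further apply Lemma \ref{L:PowerControl} to compose each homomorphism into $G \in \mathcal{F}$ with the embedding $G \hookrightarrow \PSL(R,\F_q)$ supplied by that lemma; hence we may replace $\mathcal{F}$ with a subfamily of $\set{\PSL(R,\F_q)}_q$, which again has bounded projective rank $R$.

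Next, since $\Gamma$ is uniformly malabelian (hence malabelian in the sense required by Lemma \ref{L:ResGL-UniResGL}), that lemma upgrades ``residually'' to ``fully residually''. We then apply Lemma \ref{L:UltraLemmaApp} to produce an injective homomorphism $\varphi\colon \Gamma \to \GL(n', \mathbf{K})$ for some integer $n'$ and field $\mathbf{K}$; by the remark following Lemma \ref{L:UltraLemma} we may take $n' = n(D)^2$ in case (a) and $n' = R(D')^2$ in case (b), so $n'$ depends only on $D$ (resp.\ $D'$) and not on $\Gamma$.

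For the ``moreover'' statement, suppose toward a contradiction that $\varphi(\gamma) = \lambda \mathrm{I}_{n'}$ for some non-trivial $\gamma \in \Gamma$ and $\lambda \in \mathbf{K}^\times$. Since scalar matrices are central in $\GL(n',\mathbf{K})$, for every $\mu \in \Gamma$ we have
\[
\varphi(\mu^{-1}\gamma\mu) = \varphi(\mu)^{-1} \lambda \mathrm{I}_{n'} \varphi(\mu) = \lambda \mathrm{I}_{n'} = \varphi(\gamma),
\]
so injectivity of $\varphi$ forces $\mu^{-1}\gamma\mu = \gamma$. Thus $\gamma$ is central in $\Gamma$. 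But applying the malabelian property with $\eta = \gamma$ produces some $\mu \in \Gamma$ with $[\mu^{-1}\gamma\mu,\gamma] \ne 1$, whereas centrality of $\gamma$ forces $[\mu^{-1}\gamma\mu,\gamma] = [\gamma,\gamma] = 1$. This contradiction yields $\varphi(\Gamma) \cap (\mathbf{K}^\times \cdot \mathrm{I}_{n'}) = \mathrm{I}_{n'}$.

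The main (mild) obstacle is bookkeeping: one must check that the constant $n'$ in each case depends only on $D$ (resp.\ $D'$), which is already built into Proposition \ref{P:MainTechProp-GL} where the bound $R$ is explicitly noted to be independent of $\Gamma$. Apart from that, the corollary is essentially the assembly of Proposition \ref{P:MainTechProp-GL}, Lemma \ref{L:PowerControl}, Lemma \ref{L:ResGL-UniResGL}, and Lemma \ref{L:UltraLemmaApp}, together with the short malabelian-plus-injectivity argument ruling out non-trivial scalar images.
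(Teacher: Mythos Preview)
Your proof is correct and follows the same chain of lemmas as the paper (Proposition~\ref{P:MainTechProp-GL} $\to$ Lemma~\ref{L:ResGL-UniResGL} $\to$ Lemma~\ref{L:UltraLemmaApp}) for the existence of the injective representation. Two minor remarks: your invocation of Lemma~\ref{L:PowerControl} in case~(b) is unnecessary, since Lemma~\ref{L:UltraLemma} already handles families of bounded \emph{projective} rank directly; and in case~(a) you may take $n' = n(D)$ rather than $n(D)^2$, since bounded rank (as opposed to bounded projective rank) needs no squaring.

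For the ``moreover'' clause your argument differs from the paper's. The paper argues by tracking the construction: it asserts that for each coordinate map $\varphi_j$ one has $\varphi_j(\gamma) \notin Z(\varphi_j(\Gamma))$, hence $\varphi_j(\gamma)$ is not scalar, and this non-scalarity passes to the ultraproduct. Your argument is more direct and self-contained: you use only injectivity of $\varphi$ together with the malabelian property, observing that a scalar image would force $\gamma$ to be central in $\Gamma$, which malabelian groups forbid. Your route avoids having to revisit the internal construction of $\varphi$ and is arguably cleaner; the paper's route, on the other hand, makes explicit that the non-centrality is already present at each finite stage, which is a slightly stronger observation even if not needed here.
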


\begin{proof}
The first part of (a) and (b) follow from Lemma \ref{L:UltraLemmaApp}, Lemma \ref{L:ResGL-UniResGL}, and Proposition \ref{P:MainTechProp-GL}. In either case, we have an injective homomorphism $\varphi\colon \Gamma \to \GL(n,\mathbf{K})$ for some $n \in \N$ and field $\mathbf{K}$. Moreover, it follows from the construction of $\varphi$ that $\varphi(\gamma) \notin \mathbf{K}^\times \cdot \mathrm{I}_n$ for any non-trivial $\gamma \in \Gamma$ since $\varphi_j(\gamma) \notin Z(\varphi_j(\Gamma))$, where $Z(\varphi_j(\Gamma))$ is the center of $\varphi_j(\Gamma)$.
\end{proof}

The converse of Corollary \ref{C:MainLin} follows from the proof of Theorem 1.1 in \cite{BM15} and does not require that $\Gamma$ be malabelian. As $\GL(n,\mathbf{K}) < \SL(n+1,\mathbf{K})$, we will assume that $\Gamma <\SL(n,\mathbf{K})$ in (b).

\begin{proposition}\label{P:LinCon2}
Let $\Gamma$ be a finitely generated group.
\begin{itemize}
\item[(a)]
If $\Gamma \leq \GL(n,\mathbf{K})$ for some $n \in \N$ and field $\mathbf{K}$, then there exists $D(n,\mathbf{K}) \in \N$ such that $\DM_{\Gamma,\FL}(m) \preceq m^D$.
\item[(b)]
If $\Gamma \leq \SL(n,\mathbf{K})$ for some $n \in \N$ and field $\mathbf{K}$ with $\Gamma \cap (\mathbf{K}^\times \cdot \mathrm{I}_n) = \mathrm{I}_n$, then there exists $D(n,\mathbf{K}) \in \N$ such that $\DM_{\Gamma,\M}(m) \preceq m^D$.
\end{itemize}
\end{proposition}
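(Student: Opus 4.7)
The plan is to adapt the proof of Theorem 1.1 in \cite{BM15}, which already establishes the analogous polynomial bound for the unrestricted function $\DM_\Gamma$. Fix a finite generating set $X$ for $\Gamma$, and let $R$ be the subring of $\mathbf{K}$ generated by the matrix entries of the generators in $X$ and the inverses of their determinants; then $\Gamma \leq \GL(n,R)$ with $R$ a finitely generated integral domain. For any $\gamma \in \Gamma$ with $\norm{\gamma}_X \leq m$, the entries of $\gamma$ are word-length-$m$ polynomial expressions in the entries of the generators, so their arithmetic complexity (degree and height, once $R$ is presented as an affine algebra over its prime subring) is polynomial in $m$.

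For part (a), given a non-trivial $\gamma \in \Gamma$ with $\norm{\gamma}_X \leq m$, choose a non-zero entry $r_\gamma$ of $\gamma - \mathrm{I}_n$. The technical input of \cite{BM15} yields a maximal ideal $\mathfrak{m} \su R$ with $R/\mathfrak{m}$ a finite field, $r_\gamma \notin \mathfrak{m}$, and $\abs{R/\mathfrak{m}} \leq C_1 m^{D_1}$ for constants depending only on $n$ and $\mathbf{K}$. The reduction $\varphi\colon \Gamma \to \GL(n,R/\mathfrak{m})$ then satisfies $\varphi(\gamma) \ne \mathrm{I}_n$, and $\abs{\GL(n,R/\mathfrak{m})} \leq \abs{R/\mathfrak{m}}^{n^2}$, producing the required polynomial bound on $\DM_{\Gamma,\FL}(m)$.

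For part (b), the trick is to convert the non-triviality of $\gamma$ into a condition that survives the projection to $\PSL$. The hypothesis $\Gamma \cap (\mathbf{K}^\times \cdot \mathrm{I}_n) = \mathrm{I}_n$ ensures that every non-trivial $\gamma \in \Gamma$ is a non-scalar matrix, so we can select a witness $r_\gamma \in R$ for non-scalarness: either a non-zero off-diagonal entry $\gamma_{ij}$ with $i \ne j$, or a non-zero diagonal difference $\gamma_{ii} - \gamma_{jj}$. The arithmetic complexity of $r_\gamma$ is still polynomial in $m$. Applying the same residue-field bound from \cite{BM15} produces a maximal ideal $\mathfrak{m}$ with $R/\mathfrak{m}$ finite, $r_\gamma \notin \mathfrak{m}$, and $\abs{R/\mathfrak{m}}$ polynomial in $m$. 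Composing reduction with the projection $\pi\colon \SL(n,R/\mathfrak{m}) \to \PSL(n,R/\mathfrak{m})$ gives a non-trivial image of $\gamma$, precisely because $\gamma \bmod \mathfrak{m}$ remains non-scalar. For $n \geq 2$ and all but the two pairs $(n,q) \in \set{(2,2),(2,3)}$, the group $\PSL(n,\F_q)$ is simple; since the \cite{BM15} machinery supplies infinitely many valid ideals (we may assume $\mathbf{K}$ is infinite, else $\Gamma$ is finite and the claim is vacuous), we can avoid these exceptional $q$. The bound $\abs{\PSL(n,R/\mathfrak{m})} \leq \abs{R/\mathfrak{m}}^{n^2}$ then yields $\DM_{\Gamma,\M}(m) \preceq m^D$.

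The principal obstacle is the polynomial bound on $\abs{R/\mathfrak{m}}$: this is the main technical content of \cite{BM15} and rests on effective estimates for heights and valuations on finitely generated commutative rings. Once this is imported, the only genuinely new point is the passage from non-triviality to non-scalarness in (b), which is exactly the function of the intersection-with-scalars hypothesis and is the reason that hypothesis appears in the statement.
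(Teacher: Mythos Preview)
Your proposal is correct and follows essentially the same route as the paper: both parts invoke the small-residue-field machinery of \cite{BM15}, and for (b) both use the trivial-intersection-with-scalars hypothesis to ensure the reduced image of $\gamma$ is non-scalar before projecting to $\PSL(n,\F_q)$. Your treatment is in fact slightly more explicit than the paper's, since you spell out the witness $r_\gamma$ for non-scalarness and handle the finitely many $(n,q)$ for which $\PSL(n,\F_q)$ fails to be simple, whereas the paper simply asserts that one ``can arrange for $\varphi_q(\gamma) \notin \F_q^\times \cdot \mathrm{I}_n$'' and does not mention the exceptional pairs.
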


\begin{proof}
Part (a) follows immediately from the proof Theorem 1.1 of \cite{BM15}. For (b), given a non-trivial element $\gamma \in \Gamma$, by \cite{BM15} there exist $C_0,D_0 \in \N$ (independent of $\gamma$), a finite field $\F_q$ with $q \leq C_0\norm{\gamma}_X^{D_0}$, and a homomorphism $\varphi_q\colon \Gamma \to \SL(n,\F_q)$ with $\varphi_q(\gamma) \ne 1$. Moreover, since $\gamma \notin \mathbf{K}^\times \cdot \mathrm{I}_n$, we can arrange for $\varphi_q(\gamma) \notin \F_q^\times \cdot\mathrm{I}_n$. Composing $\varphi_q$ with the homomorphism $\SL(n,\F_q) \to \PSL(n,\F_q)$, we obtain $\mathrm{P}\varphi_q\colon \Gamma \to \PSL(n,\F_q)$ with $\mathrm{P}\varphi_q(\gamma) \ne 1$. By selection of $q$, we have $\abs{\PSL(n,\F_q)} \leq q^{n^2} \leq C_0^{n^2}\norm{\gamma}_X^{D_0n^2}$. As $\gamma$ was arbitrary, we have $\DM_{\Gamma,\M}(m) \preceq m^{D_0n^2}$. 
\end{proof}

Combining Corollary \ref{C:MainLin} and Proposition \ref{P:LinCon2}, we obtain the following corollary.

\begin{corollary}\label{C:Main}
If $\Gamma$ is a finitely generated, uniformly malabelian group with an element of infinite order, then the following are equivalent:
\begin{itemize}
\item[(a)]
There exists $D \in \N$ such that $\DM_{\Gamma,\FL}(m) \preceq m^D$.
\item[(b)]
There exists $D' \in \N$ such that $\DM_{\Gamma,\M}(m) \preceq m^{D'}$.
\item[(c)]
There exists $n \in \N$, a field $\mathbf{K}$, and an injective homomorphism $\varphi\colon \Gamma \to \GL(n,\mathbf{K})$.
\end{itemize}
\end{corollary}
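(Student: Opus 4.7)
My plan is to assemble the corollary from the two immediately preceding results, Corollary \ref{C:MainLin} and Proposition \ref{P:LinCon2}, with one small technical adjustment needed in the last implication. The hypotheses on $\Gamma$ (finitely generated, uniformly malabelian, and containing an element of infinite order) match those of Corollary \ref{C:MainLin}, so the implications (a) $\Rightarrow$ (c) and (b) $\Rightarrow$ (c) will follow immediately from parts (a) and (b) of that corollary; each supplies an injective homomorphism $\Gamma \to \GL(n,\mathbf{K})$ as required by (c).

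For the reverse direction, I would obtain (c) $\Rightarrow$ (a) as an immediate invocation of Proposition \ref{P:LinCon2}(a); no further work is needed, and in this direction the malabelian hypothesis is not used at all.

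The one step that needs a brief argument is (c) $\Rightarrow$ (b), because Proposition \ref{P:LinCon2}(b) requires an embedding into $\SL(n,\mathbf{K})$ whose image meets the scalar subgroup $\mathbf{K}^\times \cdot I_n$ trivially, whereas (c) supplies only a bare embedding $\psi\colon \Gamma \hookrightarrow \GL(n,\mathbf{K})$. My plan is to pre-compose with an auxiliary block-diagonal embedding $\iota\colon \GL(n,\mathbf{K}) \hookrightarrow \SL(n+2,\mathbf{K})$ defined by $g \mapsto g \oplus 1 \oplus \det(g)^{-1}$. A straightforward verification shows $\iota$ is an injective homomorphism landing in $\SL$; the middle diagonal entry $1$ forces any scalar $\mu I_{n+2}$ in the image to satisfy $\mu = 1$, after which the upper-left block forces $\psi(\gamma) = I_n$ and hence $\gamma = 1$ by injectivity of $\psi$. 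Thus $\iota \circ \psi$ satisfies the scalar-freeness hypothesis, and Proposition \ref{P:LinCon2}(b) delivers (b).

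No step is a genuine obstacle; the only point meriting attention is the choice of $\iota$, which is necessary because the naive embedding $\GL(n,\mathbf{K}) \hookrightarrow \SL(n+1,\mathbf{K})$ given by $g \mapsto g \oplus \det(g)^{-1}$ can send $(n+1)$-th roots of unity in the center of $\GL(n,\mathbf{K})$ to genuine scalars of $\SL(n+1,\mathbf{K})$, which would spoil the hypothesis required for Proposition \ref{P:LinCon2}(b). Adding the extra fixed coordinate equal to $1$ is what makes the scalar-freeness automatic.
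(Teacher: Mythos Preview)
Your proof is correct and follows the same scheme the paper sketches: the implications (a)$\Rightarrow$(c) and (b)$\Rightarrow$(c) come from Corollary~\ref{C:MainLin}, and (c)$\Rightarrow$(a), (c)$\Rightarrow$(b) from Proposition~\ref{P:LinCon2}. The only place you diverge is in manufacturing scalar-freeness for (c)$\Rightarrow$(b) via the embedding $g\mapsto g\oplus 1\oplus\det(g)^{-1}$ into $\SL(n+2,\mathbf{K})$. That trick is valid and has the pleasant side effect of making (c)$\Rightarrow$(b) independent of the malabelian hypothesis. However, under the stated hypotheses it is unnecessary: a malabelian group has trivial center (if $z$ were central and nontrivial, then $[\mu^{-1}z\mu,\eta]=[z,\eta]=1$ for every $\mu,\eta$), so any faithful $\psi\colon\Gamma\hookrightarrow\GL(n,\mathbf{K})$ already satisfies $\psi(\Gamma)\cap(\mathbf{K}^\times\!\cdot I_n)=I_n$, and then the paper's naive embedding $\GL(n,\mathbf{K})<\SL(n+1,\mathbf{K})$ suffices. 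Either route closes the circle.
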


We note that the equivalence of (a) and (c) is independent of the classification of finite simple groups while the equivalence of (b) and (c) is not.

\section{Proof of Theorem \ref{T:Main}}

In the proof of Theorem \ref{T:Main}, we will need to relate the functions $\DM_{\Gamma,\FL},\DM_{\Gamma_0,\FL}$ and $\DM_{\Gamma,\M},\DM_{\Gamma_0,\M}$ for a finitely generated group $\Gamma$ and a finite index subgroup $\Gamma_0$. 

\begin{proposition}\label{P:Invariance}
Let $\Gamma$ be a finitely generated group and $\Gamma_0 \leq \Gamma$ a finite index subgroup.
\begin{itemize}
\item[(a)]
There exists $D \in \N$ such that $\DM_{\Gamma,\FL}(m) \preceq m^D$ if and only if there exists $D' \in \N$ such that $\DM_{\Gamma_0,\FL}(m) \preceq m^{D'}$.
\item[(b)]
If $\Gamma_0$ is uniformly malabelian with an element of infinite order, then there exists $D \in \N$ such that $\DM_{\Gamma,\M}(m) \preceq m^D$ if and only if there exists $D' \in \N$ such that $\DM_{\Gamma_0,\M}(m) \preceq m^{D'}$.
\end{itemize}
\end{proposition}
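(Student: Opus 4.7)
The plan is to handle each direction of (a) and (b) separately. The implications from $\Gamma$ to $\Gamma_0$ in both cases will follow immediately from restriction: fix $L = \max_{x_0 \in X_0} \norm{x_0}_X$, so that $\norm{\eta}_X \leq L\norm{\eta}_{X_0}$ for every $\eta \in \Gamma_0$. Any homomorphism $\varphi\colon \Gamma \to G$ (with $G$ either a $\GL(n,\F_q)$ or a finite simple group) that does not kill $\gamma \in \Gamma_0 - \set{1}$ restricts to a homomorphism $\varphi|_{\Gamma_0}$ of the same type, so a polynomial bound on $\DM_{\Gamma,\FL}$ (resp.\ $\DM_{\Gamma,\M}$) transfers to the same bound on $\DM_{\Gamma_0,\FL}$ (resp.\ $\DM_{\Gamma_0,\M}$) after absorbing a factor of $L^D$ into the constant.

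For the converse in (a), fix a transversal $t_1,\dots,t_k$ for $\Gamma/\Gamma_0$ and let $N = \bigcap_i t_i\Gamma_0 t_i^{-1}$ be the normal core, a finite-index normal subgroup of $\Gamma$. A Schreier rewriting argument yields a constant $C_1$ with $\norm{\eta}_{X_0} \leq C_1\norm{\eta}_X$ for every $\eta \in \Gamma_0$. Given a non-trivial $\gamma \in \Gamma$, I split into cases. If $\gamma \notin N$, then $\gamma$ survives the quotient $\Gamma \to \Gamma/N$, and $\Gamma/N$ (finite, of order at most $k!$) embeds via its regular representation in some $\GL(n_0,\F_{q_0})$ of fixed bounded size, giving a homomorphism $\Gamma \to \GL(n_0,\F_{q_0})$ not killing $\gamma$. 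If $\gamma \in N \subseteq \Gamma_0$, apply the hypothesis to obtain $\varphi\colon \Gamma_0 \to \GL(n',\F_{q'})$ with $\varphi(\gamma)\ne 1$ and $\abs{\GL(n',\F_{q'})} \leq C\norm{\gamma}_{X_0}^{D'}$, and induce to form $\mathrm{Ind}(\varphi)\colon \Gamma \to \GL(n'k,\F_{q'})$. Because $\gamma \in N$, the matrix $\mathrm{Ind}(\varphi)(\gamma)$ is block diagonal with blocks $\varphi(t_i^{-1}\gamma t_i)$, and the block at $t_i = 1$ is $\varphi(\gamma) \ne 1$. A coarse estimate of the form $\abs{\GL(n'k,\F_{q'})} \leq (2\abs{\GL(n',\F_{q'})}^2)^{k^2}$ then yields $\DM_{\Gamma,\FL}(m) \preceq m^{2k^2 D'}$.

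For the converse in (b), exploit the added hypothesis that $\Gamma_0$ is uniformly malabelian with an element of infinite order: Corollary \ref{C:MainLin}(b), applied to $\Gamma_0$, produces an injective $\varphi\colon \Gamma_0 \to \GL(n',\mathbf{K})$ with $\varphi(\Gamma_0)\cap \mathbf{K}^\times I_{n'} = I_{n'}$. Inducing gives $\mathrm{Ind}(\varphi)\colon \Gamma \to \GL(n'k,\mathbf{K})$; a block-diagonal analysis of the action on elements of $N$ shows both that $\mathrm{Ind}(\varphi)$ is injective (an element in the kernel must lie in $N$, and each block $\varphi(t_i^{-1}\gamma t_i)$ being trivial forces $\gamma = 1$) and that $\mathrm{Ind}(\varphi)(\Gamma)\cap \mathbf{K}^\times I_{n'k} = I_{n'k}$. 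Composing with $\GL(n'k,\mathbf{K}) \hookrightarrow \SL(n'k+1,\mathbf{K})$ via $A\mapsto \mathrm{diag}(A,\det(A)^{-1})$ preserves the trivial intersection with scalar matrices, so Proposition \ref{P:LinCon2}(b) applies and gives $\DM_{\Gamma,\M}(m) \preceq m^D$ for some $D$. The main obstacle in both parts is the precise book-keeping on the induced representation: the non-triviality of $\mathrm{Ind}(\varphi)(\gamma)$ in (a) and the faithfulness plus scalar-avoidance in (b) all hinge on the block-diagonal description of $\mathrm{Ind}(\varphi)$ restricted to $N$, and transferring the scalar-avoidance through $\GL_{n'k} \hookrightarrow \SL_{n'k+1}$ requires a small but careful verification.
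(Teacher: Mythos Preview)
Your proof is correct. Part (a) is essentially the paper's argument: both split on membership in a normal finite-index subgroup (you use the normal core $N$ of $\Gamma_0$; the paper first replaces $\Gamma_0$ by a normal $\Gamma_1\leq\Gamma_0$) and handle the interior case by inducing the finite $\GL$--representation.

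The reverse implication in (b) is where you genuinely diverge. The paper stays entirely at the level of finite quotients: from the polynomial bound on $\DM_{\Gamma_0,\M}$ and uniform malabelianity it invokes Proposition~\ref{P:MainTechProp-GL}(b) to see that the witnessing simple groups have bounded projective rank $R$, then uses Lemma~\ref{L:PowerControl} to embed each such $G$ into $\PSL(R,\F_q)$ with $\abs{\PSL(R,\F_q)}\le\abs{G}^{D''}$, and finally induces the composite $\Gamma_0\to\PSL(R,\F_q)$ up to $\Gamma\to\PSL(R\ell_0,\F_q)$. Your route instead passes through linearity: Corollary~\ref{C:MainLin}(b) gives a faithful scalar-avoiding representation of $\Gamma_0$, induction produces a faithful scalar-avoiding representation of $\Gamma$, and Proposition~\ref{P:LinCon2}(b) converts that back into a polynomial bound on $\DM_{\Gamma,\M}$. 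Your approach is tidier---it sidesteps Lemma~\ref{L:PowerControl} and the explicit $\PSL$ bookkeeping altogether, letting the already-established equivalence (b)$\Leftrightarrow$(c) for $\Gamma_0$ do the work. The paper's approach, by contrast, is more direct at the finite-group level and yields an explicit exponent $D = D'D''R^2\ell_0^2$ in terms of the input data.
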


\begin{proof} 
The direct implications for (a) and (b) are straightforward. For the reverse implications in (a) and (b), we assume $\Gamma_0$ is normal as we can replace $\Gamma_0$ with a finite index subgroup $\Gamma_1\leq \Gamma_0$ with $\Gamma_1 \lhd \Gamma$. 

For (a), given a non-trivial $\gamma \in \Gamma$, we split into two cases. First, if $\gamma \in \Gamma - \Gamma_0$, then we have $\varphi_0\colon \Gamma \to Q_0 = \Gamma/\Gamma_0$ with $\varphi_0(\gamma) \ne 1$. Fixing $n_0 \in \N$ and a finite field $\F_{q_0}$ with $Q_0 \leq \GL(n_0,\F_{q_0})$, we see that $\FL_\Gamma(\gamma) \leq \abs{\GL(n_0,\F_{q_0})}$. If $\gamma \in \Gamma_0$, by assumption there exists $n \in \N$, a finite field $\F_q$, and a homomorphism $\varphi\colon \Gamma_0 \to \GL(n,\F_q)$ with $\varphi(\gamma) \ne 1$ and $\abs{\GL(n,\F_q)} \leq C_0\norm{\gamma}^{D'}$ for $C_0 \in \N$ that is independent of $\gamma$. Setting $\varphi' = \mathrm{Ind}_{\Gamma_0}^\Gamma(\varphi)$, we have $\varphi'\colon \Gamma \to \GL(n\ell_0,\F_q)$ with $\varphi'(\gamma) \ne 1$, where $\ell_0 = [\Gamma:\Gamma_0]$. Since $\abs{\GL(n\ell_0,\F_q)} \leq q^{n^2\ell_0^2} \leq \abs{\GL(n,\F_q)}^{2\ell_0^2}$, we see that $\FL_\Gamma(\gamma) \leq C\norm{\gamma}^{2\ell_0^2D'}$ for some $C \in \N$ that is independent of $\gamma$.

For (b), if $\gamma \in \Gamma - \Gamma_0$, we argue as in (a). When $\gamma \in \Gamma_0$, additional work is required. By Proposition \ref{P:MainTechProp-GL}, there exists a set of finite simple groups $\mathcal{F}$ such that $\Gamma_0$ is residually $\mathcal{F}$ and $\mathcal{F}$ has bounded projective rank for some $R \in \N$.  By Lemma \ref{L:PowerControl}, there exists $D''(R) \in \N$ such that for each $G \in \mathcal{F}$, there is a finite field $\F_q$ and an injective homomorphism $\psi_G\colon G \to \PSL(R,\F_q)$ with $\abs{\PSL(R,\F_q)} \leq \abs{G}^{D''}$. For $\gamma \in \Gamma_0$, by assumption there exists a finite simple group $G \in \mathcal{F}$ and a homomorphism $\varphi_1\colon \Gamma_0 \to G$ such that $\varphi_1(\gamma) \ne 1$ and $\abs{G} \leq C''\norm{\gamma}^{D'}$ for some $C'' \in \N$ that is independent of $\gamma$. Composing $\varphi_1$ and $\psi_G$, we have $\varphi_2\colon \Gamma_0 \to \PSL(R,\F_q)$ with $\varphi_2(\gamma) \ne 1$. Setting $\varphi = \mathrm{Ind}_{\Gamma_0}^\Gamma(\varphi_2)$, we obtain $\varphi\colon \Gamma \to \PSL(R\ell_0,\F_q)$. As $G$ is simple and $\varphi_1(\gamma) \ne 1$, we see that $\varphi(\gamma) \ne 1$. Since
\[ \abs{\PSL(R\ell_0,\F_q)} \leq q^{R^2\ell_0^2} \leq \abs{\PSL(R,\F_q)}^{R^2\ell_0^2} \leq \abs{G}^{D''R^2\ell_0^2} \leq (C'')^{D''R^2\ell_0^2}\norm{\gamma}^{D'D''R^2\ell_0^2}, \]
we see that $\M_\Gamma(\gamma) \leq C'\norm{\gamma}^D$ for $D= D'D''R^2\ell_0^2$ and $C' = (C'')^{D/D'}$. 
\end{proof}

The malabelian assumption in (b) seems like overkill but gives the needed control on $r(G)$ to use induction. One could instead take a wreath product with $\Gamma/\Gamma_0$ and $G$ but from that we still need to find a reasonably small finite simple group that contains it; the left regular representation is too large.

\begin{proof}[Proof of Theorem \ref{T:Main}]
The theorem is straightforward for elementary hyperbolic groups and so we assume $\Gamma$ is non-elementary. If $\Gamma$ is not residually finite, then by Mal'cev \cite{Mal40}, $\Gamma$ does not satisfy (c). Moreover, both functions $\DM_{\Gamma,\FL}(m),\DM_{\Gamma,\M}(m)$ are infinite for sufficiently large $m$.

If $\Gamma$ is residually finite, then there exists a torsion-free, finite index subgroup $\Gamma_0 \leq \Gamma$ and it is straightforward to see that $\Gamma_0$ is uniformly malabelian (see \cite[p. 459--462]{BH99}). By Corollary \ref{C:Main}, properties (a), (b), and (c) are equivalent for $\Gamma_0$. By Proposition \ref{P:Invariance}, $\Gamma_0$ satisfies (a), (b) if and only $\Gamma$ satisfies (a), (b). That $\Gamma$ satisfies (c) if and only if $\Gamma_0$ satisfies (c) follows from induction/restriction.
\end{proof}



\end{document}